\newcommand{\Om} {\Omega}
\newcommand{\ep} {\epsilon}
\newcommand{\gm} {\gamma}
\newcommand{\om} {\omega}
\newcommand{\ii }{\infty}
\newcommand{\ol} {\overline}
\newcommand{\dt }{\delta}
\newcommand{\lb }{\lambda}
\newcommand{\al} {\alpha}
\newcommand{\su} {\subset}
\newcommand{\mc}{\mathcal}
\newcommand{\sg}{\sigma}
\newtheorem{teo}{Theorem}[section]
\newtheorem{pro}{Proposition}[section]
\newtheorem{cor}{Corollary}[section]
\newtheorem{lm}{Lemma}[section]
\theoremstyle{definition}
\newtheorem{rem}{Remark}[section]
\newtheorem{df}{Definition}[section]
\title{A Non-commutative Wiener-Wintner Theorem}
\keywords{non-commutative Wiener-Wintner property, non-commutative Van der Corput's inequality}
\subjclass[2000]{ 47A35(primary), 46L51(secondary)}
\begin{document}
\date{December 3, 2013}
\begin{abstract}
For a von Neumann algebra $\mathcal M$ with a faithful normal tracial state $\tau$ and a positive ergodic homomorpsism $\al : \mc L^1(\mc M,\tau)\to \mc L^1(\mc M,\tau)$ such that $\tau \circ \al = \tau$ and $\al$ does not increase the norm in $\mc M$, we establish a non-commutative counterpart of the classical Wiener-Wintner ergodic theorem.
\end{abstract}

\author{Semyon Litvinov}
\address{Pennsylvania State University\\ Hazleton, PA 18202, USA}
\email{snl2@psu.edu}

\maketitle

\section{Introduction and preliminaries}
The celebrated Wiener-Wintner theorem is by far one of the most deep and fruitful results of the classical ergodic theory. 
It may be stated as follows.
\begin{teo}\label{Theorem 0}
Let $(\Om, \mu)$ be a probability space, and let $T:\Om \to \Om$ be an ergodic measure preserving transformation. Then for any function 
$f\in L^1(\Om,\mu)$ there exists a set $\Om_f$ of full measure such that, given $\om \in \Om_f$, the averages
$$
a_n(f,\lb)(\om)=\frac 1n \sum_{k=0}^{n-1} \lb^k f(T^k\om)
$$
converge for all $\lb \in \Bbb C$ with $| \lb |=1$.
\end{teo}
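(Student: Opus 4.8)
The plan is to prove the statement first on $L^2(\Om,\mu)$ through the spectral structure of the Koopman operator $Uf=f\circ T$, and then to descend to $L^1(\Om,\mu)$ by density. The descent rests on the elementary pointwise bound $|a_n(f,\lb)(\om)|\le \frac1n\sum_{k=0}^{n-1}|f(T^k\om)|$, valid uniformly in $\lb$, which dominates the full maximal function $\sup_{|\lb|=1}\sup_n|a_n(f,\lb)(\om)|$ by the ordinary ergodic maximal function of $|f|$; the latter satisfies the weak-type $(1,1)$ maximal inequality. By the Banach principle it follows that the set $\mc C$ of those $f$ for which $a_n(f,\lb)(\om)$ converges for a.e. $\om$ and all $\lb$ with $|\lb|=1$ is a closed subspace of $L^1(\Om,\mu)$, so it suffices to verify membership in $\mc C$ on a subset whose closed span exhausts $L^1(\Om,\mu)$, and for this I would use $L^2(\Om,\mu)$.

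On $L^2(\Om,\mu)$ I would invoke the orthogonal decomposition $L^2(\Om,\mu)=\mc H_e\oplus\mc H_{wm}$, where $\mc H_e$ is the closed linear span of the eigenfunctions of $U$ and $\mc H_{wm}$ its orthogonal complement, the weakly mixing subspace. For an eigenfunction, $Uf=\mu f$ with $|\mu|=1$, a direct computation gives $a_n(f,\lb)(\om)=f(\om)\cdot\frac1n\sum_{k=0}^{n-1}(\lb\mu)^k$, and the geometric average on the right converges for every $\lb$ on the unit circle, to $f(\om)$ when $\lb\mu=1$ and to $0$ otherwise. Hence every finite linear combination of eigenfunctions belongs to $\mc C$, and since such combinations are dense in $\mc H_e$ while $\mc C$ is closed, $\mc H_e\su\mc C$.

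The weakly mixing part is where Van der Corput's inequality enters, and here I would prove the sharper assertion that $\sup_{|\lb|=1}|a_n(f,\lb)(\om)|\to 0$ for a.e. $\om$. Applying Van der Corput's inequality to the scalars $u_k=\lb^k f(T^k\om)$ bounds $|a_N(f,\lb)(\om)|^2$, up to an absolute constant, by $\frac1H\cdot\frac1N\sum_{k=0}^{N-1}|f(T^k\om)|^2$ plus $\frac1H\sum_{h=1}^{H}\left|\frac1N\sum_{k=0}^{N-1}\langle u_{k+h},u_k\rangle\right|$. The decisive point is the identity $\langle u_{k+h},u_k\rangle=\lb^h\,(U^hf\cdot\ol f)(T^k\om)$: by Birkhoff's theorem the inner average converges a.e. to $\lb^h\langle U^hf,f\rangle$, whose modulus $|\langle U^hf,f\rangle|$ is \emph{independent of} $\lb$. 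Taking first the supremum over $\lb$ and then $N\to\ii$ therefore leaves, a.e. and up to a constant, the quantity $\frac1H\nm f\nm_2^2+\frac1H\sum_{h=1}^{H}|\langle U^hf,f\rangle|$; both summands tend to $0$ as $H\to\ii$, the second because weak mixing of $f$ is equivalent to $\frac1H\sum_{h=1}^{H}|\langle U^hf,f\rangle|\to 0$. Letting $H\to\ii$ yields $\mc H_{wm}\su\mc C$.

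Since $\mc C$ then contains both $\mc H_e$ and $\mc H_{wm}$, it contains their sum $L^2(\Om,\mu)$, which is dense in $L^1(\Om,\mu)$; as $\mc C$ is closed, $\mc C=L^1(\Om,\mu)$, proving the theorem. I expect the single genuine obstacle to be the uniform-in-$\lb$ estimate on $\mc H_{wm}$: the maximal inequality needed for the density argument is essentially free, being inherited from Birkhoff's theorem, but the entire force of the theorem, namely that one null set serves all $\lb$ simultaneously, comes precisely from the $\lb$-independence of the limits $|\langle U^hf,f\rangle|$ extracted through Van der Corput's inequality. It is exactly this mechanism, together with a suitable non-commutative maximal estimate, that the non-commutative setting of this paper will have to reconstruct.
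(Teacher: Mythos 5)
Your proposal is correct and follows essentially the same route as the paper: the paper states Theorem \ref{Theorem 0} without proof, but its non-commutative argument for Theorem \ref{Theorem 6} is precisely your scheme in operator form --- closedness of the Wiener--Wintner class via a maximal-type (uniform equicontinuity) estimate, the eigenfunction span handled by the direct geometric-series computation, and the orthogonal complement $\mc K^\perp$ treated by Van der Corput's inequality together with continuity of the spectral measure and Wiener's criterion, yielding the uniform-in-$\lb$ convergence to zero (Bourgain's uniform theorem), exactly as in your $\mc H_{wm}$ step. Your closing observation is also on target: the $\lb$-independence of $|\langle U^hf,f\rangle|$ extracted by Van der Corput, plus a maximal estimate, is exactly what the paper reconstructs non-commutatively (Lemma \ref{Proposition 1} and Propositions \ref{Proposition 5}, \ref{Proposition 6}).
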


The aim of this article is to establish a non-commutative extension of Theorem \ref{Theorem 0}. We follow the path of "simple inequality" as it is
outlined in \cite{as}. This means that our argument relies on a non-commutative Van der Corput's inequality. Note that such an inequality was established in \cite{nsz}.

\vskip 5pt
Let $H$ be a Hilbert space, $B(H)$ the algebra of all bounded linear operators in $H$, $\| \cdot \|_{\ii}$ the uniform norm in $B(H)$,
$\Bbb I$ the unit of $B(H)$. Let $\mathcal M\su B(H)$ be a semifinite von Neumann algebra with a faithful normal semifinite trace $\tau$. We denote by $P(\mathcal M)$ the complete lattice of all projections in $\mathcal M$ and set $e^{\perp}=\Bbb I-e$ whenever $e\in P(\mathcal M)$. 

A densely-defined closed operator $x$ in $H$ is said to be {\it affiliated} with the algebra $\mathcal M$ if
$x^{\prime}x\su xx^{\prime}$ for every $x^{\prime}\in B(H)$ such that $x^{\prime}x=xx^{\prime}$ for all $x\in \mathcal M$. 
An operator $x$ affiliated with $\mathcal M$  is called {\it $\tau$-measurable} if for each $\ep >0$ there exists such $e\in P(\mathcal M)$ with $\tau (e^{\perp})\leq \ep$ that the subspace $eH$ belongs to the domain of $x$. (In this case $xe\in \mathcal M$.) Let $\mathcal L=\mathcal L(\mathcal M, \tau)$ be the set of all $\tau$-measurable operators affiliated with the algebra $\mathcal M$. The topology $t_{\tau}$ defined in $\mathcal L$ by the family
$$ 
V(\ep,\dt)=\{ x\in \mathcal L : \|xe\|_{\ii}\leq \dt \text { \ for some } e\in P(\mathcal M) \text { with } \tau (e^{\perp})\leq \ep \}; \ \ep>0, \ \dt>0 
$$
of (closed) neighborhoods of zero is called a {\it measure topology}.

\begin{teo}\label{Theorem 1}
{\rm (\cite{se}; see also \cite{ne})} 
$(\mathcal L, t_{\tau})$ is a complete metrizable topological $*$-algebra.
\end{teo}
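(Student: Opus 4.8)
The statement packages four assertions: that $\mathcal L$ is closed under the (strong) algebraic operations and the adjoint, that the family $\{V(\ep,\dt)\}$ generates a topology making these operations continuous, that this topology is metrizable, and that $\mathcal L$ is complete in it. The plan is to treat these in turn, using throughout the canonical witnesses furnished by the spectral theorem: for $x\in\mathcal L$ and $\dt>0$ set $e_\dt(x)=\chi_{[0,\dt]}(|x|)$, so that $\|x\,e_\dt(x)\|_\ii\le\dt$ and $\tau(e_\dt(x)^\perp)=\tau(\chi_{(\dt,\ii)}(|x|))$; $\tau$-measurability of $x$ is exactly the statement that this last quantity tends to $0$ as $\dt\to\ii$. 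I expect these canonical projections to streamline every step, since they replace the existential ``for some $e$'' in the definition of $V(\ep,\dt)$ by an explicit choice.

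First I would establish the algebraic and continuity statements simultaneously, because the same projection estimates do both. For addition the inclusion
\[
V(\ep_1,\dt_1)+V(\ep_2,\dt_2)\su V(\ep_1+\ep_2,\dt_1+\dt_2)
\]
is immediate: if $\|xe\|_\ii\le\dt_1$ and $\|yf\|_\ii\le\dt_2$, then on $g=e\wedge f$ one has $\|(x+y)g\|_\ii\le\dt_1+\dt_2$ while $\tau(g^\perp)\le\tau(e^\perp)+\tau(f^\perp)$, using $g^\perp=e^\perp\vee f^\perp$. The crux is the multiplicative estimate
\[
V(\ep_1,\dt_1)\cdot V(\ep_2,\dt_2)\su V(\ep_1+\ep_2,\dt_1\dt_2),
\]
which carries the real content. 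Given $f$ with $yf\in\mathcal M$, $\|yf\|_\ii\le\dt_2$, and $e$ with $xe\in\mathcal M$, $\|xe\|_\ii\le\dt_1$, I would form $c=e^\perp(yf)$ and take $g=(\Bbb I-r(c))\wedge f$, where $r(c)$ is the right support projection. Then $cg=0$ forces $y(gH)\su eH$, so $xyg=(xe)(yg)$ has norm at most $\dt_1\dt_2$; and $\tau(r(c))=\tau(l(c))\le\tau(e^\perp)$ by Murray--von Neumann equivalence of left and right supports (the left support $l(c)$ lies under $e^\perp$), giving $\tau(g^\perp)\le\ep_1+\ep_2$. This single computation shows both that the strong product of two measurable operators is again $\tau$-measurable and that multiplication is jointly continuous. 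For the adjoint I would use the polar decomposition $x=u|x|$: since $|x^*|=u|x|u^*$, the spectral witnesses for $x^*$ are unitary conjugates of those for $x$ and carry the same trace, so $x\mapsto x^*$ preserves each $V(\ep,\dt)$ up to this relabeling and is continuous.

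For metrizability I would check that the sets $V(1/n,1/n)$, $n\ge 1$, form a countable base at $0$ (any $V(\ep,\dt)$ contains one of them, as $V$ is monotone in both parameters) and that the topology is Hausdorff: if $x\in V(\ep,\dt)$ for all $\ep,\dt>0$ then $\tau(\chi_{(\dt,\ii)}(|x|))=0$ for every $\dt>0$, whence $|x|=0$ and $x=0$. A translation-invariant vector-space topology with a countable neighborhood base at $0$ is metrizable by the Birkhoff--Kakutani theorem, which supplies the metric.

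The \emph{main obstacle} is completeness, where the abstract metric gives no usable limit and one must build the limiting operator by hand. Working with a $t_\tau$-Cauchy sequence $(x_n)$, I would pass to a subsequence with $\|(x_{n+1}-x_n)p_n\|_\ii\le 2^{-n}$ and $\tau(p_n^\perp)\le 2^{-n}$, arranging in addition $x_np_n\in\mathcal M$ by intersecting $p_n$ with a measurability projection for $x_n$. Setting $q_N=\bigwedge_{m\ge N}p_m$ one gets $\tau(q_N^\perp)\le 2^{-N+1}$ and, for fixed $N$, the telescoping estimate shows $(x_nq_N)_{n\ge N}$ is $\|\cdot\|_\ii$-Cauchy in $\mathcal M$, hence converges to some $y_N\in\mathcal M$. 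Since $q_N\uparrow\Bbb I$ (because $\tau(q_N^\perp)\to0$ and $\tau$ is faithful) and the $y_N$ are consistent on the increasing subspaces $q_NH$, they patch to a densely-defined operator whose closure $x$ is affiliated with $\mathcal M$; the identity $xq_N=y_N\in\mathcal M$ with $\tau(q_N^\perp)\to0$ then certifies $x\in\mathcal L$. Finally the same telescoping estimate gives $\|(x-x_n)q_N\|_\ii\le 2^{-n+1}$ for $n\ge N$, so $x_n\to x$ in $t_\tau$. The delicate points I would watch are the well-definedness and closability of the patched operator and the verification that its closure is genuinely $\tau$-measurable rather than merely affiliated --- this is precisely where the domain subtleties of unbounded operators are discharged by the meet-projection bookkeeping.
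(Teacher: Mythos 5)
The paper itself offers no proof of this theorem: it is imported verbatim from Segal \cite{se} and Nelson \cite{ne}, so there is no internal argument to compare against. Your outline is, in substance, Nelson's classical proof, and it is correct: the additive inclusion, the multiplicative estimate via $c=e^{\perp}(yf)$ and its right support (with $\tau(r(c))=\tau(l(c))\le\tau(e^{\perp})$ by traciality), metrization via Birkhoff--Kakutani from the countable base $V(1/n,1/n)$, and completeness by telescoping along $q_N=\bigwedge_{m\ge N}p_m$ are exactly the standard steps. Three points deserve to be made explicit rather than left implicit. First, your reduction to canonical spectral witnesses --- that $x\in V(\ep,\dt)$ forces $\tau\bigl(\chi_{(\dt,\ii)}(|x|)\bigr)\le\ep$ --- is a genuine lemma, not a definition-chase: one checks $\chi_{(\dt,\ii)}(|x|)\wedge e=0$ (strict spectral inequality on nonzero vectors of the first range versus $\|xe\|_{\ii}\le\dt$ on the second) and invokes the Kaplansky parallelogram law $p\sim p\vee q-q\le q^{\perp}$; this lemma underwrites both your Hausdorff step and your adjoint step. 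Second, in the polar decomposition $x=u|x|$ the operator $u$ is only a partial isometry, not a unitary, so the witnesses for $x^*$ are not literally unitary conjugates of those for $x$; the trace is nevertheless preserved because $\chi_{(\dt,\ii)}(|x|)\le u^*u$ for $\dt>0$, whence $\tau\bigl(u\chi_{(\dt,\ii)}(|x|)u^*\bigr)=\tau\bigl(\chi_{(\dt,\ii)}(|x|)\bigr)$. Third, both the multiplicative step and the completeness step produce operators defined on large subspaces ($gH$, respectively $\bigcup_N q_NH$), and landing in $\mathcal L$ requires the strong product, respectively the closure, to exist; preclosedness is discharged by running the same projection estimates on adjoints to exhibit a densely defined formal adjoint --- a point you flag for the completeness patching but which is equally needed for the product. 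With these insertions your sketch compiles into a complete proof along the very lines of the cited sources.
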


\vskip 5pt
For a positive self-adjoint operator $x=\int_{0}^{\infty}\lambda de_{\lambda}$ affiliated with $\mathcal M$ one can define
$$
\tau(x)=\sup_{n}\tau \left (\int_{0}^{n}\lambda de_{\lambda}\right )=\int_{0}^{\infty} \lb d\tau(e_{\lambda}).
$$
If $1\leq p< \infty$, then the non-commutative {\it $L^p$-space} associated with $(\mathcal M, \tau)$ is defined as
$$
\mathcal L^p=\mathcal L^p(\mathcal M,\tau)=\{ x\in \mathcal L: \| x\|_p=(\tau (| x|^p))^{1/p}<\ii \},
$$
where $|x|=(x^*x)^{1/2}$, the absolute value of $x$. Naturally, $\mathcal L^{\ii}=\mathcal M$.

\vskip 5pt
Let $\al : \mathcal L^1\to \mathcal L^1$  be a positive linear map such that $\al (x)\leq \Bbb I$ and $\tau (\al (x))\leq \tau (x)$ for every 
$x\in \mathcal L^1\cap \mathcal M$ with $0\leq x\leq \Bbb I$. If $\al$ is such a map, then, 
as is shown in \cite{y1,y2}, $\| \al (x)\|_p\leq \| x\|_p$ for each $x=x^*\in \mathcal L^1\cap \mathcal M$ 
and all $1\leq p\leq \ii$. Besides, there exist unique continuous extensions $\al :\mathcal L^p\to \mathcal L^p$ 
for every $1\leq p<\ii$ and a unique ultra-weakly continuous extension $\al : \mathcal M\to \mathcal M$. 
This implies that for every $1\leq p\leq \ii$ and $x \in \mathcal L^p$ we have $\| \al(x)\|_p \leq 2\| x\|_p$.

\vskip 5 pt
Let $\Bbb C_1=\{z\in \Bbb C: |z|=1 \}$. If $1\leq p\leq \ii$, $x\in \mathcal L^p$, $\lb \in \Bbb C_1$, we denote
 
\begin{equation}
a_n(x)=\frac 1 n \sum_{k=0}^{n-1}\al^k(x),
\end{equation}
\begin{equation}
a_n(x,\lb)=\frac 1 n \sum_{k=0}^{n-1}\lb^k \al^k(x).
\end{equation}

\vskip 5pt
There are several generally distinct types of "pointwise" (or "individual") convergence in $\mathcal L$ each of which, 
in the commutative case with finite measure, reduces to the almost everywhere convergence. 
We deal with the so-called {\it almost uniform} (a.u.) and {\it bilateral almost uniform} 
(b.a.u.) convergences for which $x_n\to x$ a.u. (b.a.u.) means that for every $\ep>0$ 
there exists such $e\in P(\mathcal M)$ that $\tau(e^{\perp})\leq \ep$ and $\| (x-x_n)e\|_{\ii}\to 0$ ($\| e(x-x_n)e\|_{\ii}\to 0$, respectively). 
Clearly, a.u. convergence implies b.a.u. convergence.

\vskip 5pt
In \cite{y2} the following non-commutative ergodic theorem was established.

\begin{teo}\label{Theorem 2}  
For every $x\in \mc L^1$, the ergodic averages (1)
converge b.a.u. to some $\hat x\in  \mc L^1$.
\end{teo}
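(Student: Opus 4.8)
The plan is to follow the standard three-step scheme for individual ergodic theorems, now in the non-commutative language of b.a.u. convergence: a weak-type maximal inequality, b.a.u. convergence on a dense subspace, and a transfer argument. Since every $x\in\mc L^1$ can be written as $x=(x_1-x_2)+i(x_3-x_4)$ with each $0\le x_j\in\mc L^1$ and the maps $a_n$ are linear, it is enough to treat a positive $x$.

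First I would establish the bilateral maximal ergodic inequality: for every $0\le x\in\mc L^1$ and every $\ep>0$ there is a projection $e\in P(\mc M)$ with $\tau(e^\perp)\le\|x\|_1/\ep$ and $\sup_n\|e\,a_n(x)\,e\|_\ii\le\ep$. This is the non-commutative weak-$(1,1)$ analogue of the Hopf maximal ergodic inequality, and I expect to obtain it by Yeadon's argument, constructing the controlling projection out of spectral data of the averages and using the positivity of $\al$ together with $\tau\circ\al\le\tau$ in an essential way.

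Next, b.a.u. convergence on a dense set. Let $\mc D$ be the linear span of the fixed operators $F=\{x\in\mc L^1:\al(x)=x\}$ and the coboundaries $\{y-\al(y):y\in\mc M\cap\mc L^1\}$. On $F$ one has $a_n(x)=x$; on a coboundary $x=y-\al(y)$ the sum telescopes, $a_n(x)=\tfrac1n\big(y-\al^n(y)\big)$, so $\|a_n(x)\|_\ii\le 2\|y\|_\ii/n\to0$, giving even uniform, hence b.a.u., convergence. Thus $a_n$ converges b.a.u. on all of $\mc D$. Density of $\mc D$ in $\mc L^1$ I would deduce from the mean ergodic theorem for the $\mc L^2$-contraction induced by $\al$, which splits $\mc L^2$ into $F$ and the closure of the coboundaries, combined with the density of $\mc M\cap\mc L^1$ in $\mc L^1$ and a routine duality/approximation step.

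Finally, the transfer (Banach principle) argument. Given $0\le x\in\mc L^1$ and $\dt>0$, pick $x'\in\mc D$ with $\|x-x'\|_1$ small, apply the maximal inequality to $x-x'$ to get a projection $e$ with $\tau(e^\perp)$ small on which $\sup_n\|e\,a_n(x-x')\,e\|_\ii$ is small, and use the already-known b.a.u. convergence of $a_n(x')$ on a second projection. An $\ep/3$-type estimate, applied after compressing everything by the meet of the two projections, shows that $\big(e\,a_n(x)\,e\big)_n$ is uniformly Cauchy on a projection of large trace; since b.a.u.-Cauchy sequences converge b.a.u. (which rests on the completeness in Theorem \ref{Theorem 1}), the limit $\hat x\in\mc L^1$ exists and the averages converge b.a.u. to it. I expect the maximal inequality of the first step to be the main obstacle: non-commutativity blocks the usual rearrangement and covering proofs, so the difficulty lies in producing a single projection that simultaneously dominates all the averages, which is exactly what Yeadon's weak-$(1,1)$ bound accomplishes.
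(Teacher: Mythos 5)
The paper does not actually prove this statement: Theorem \ref{Theorem 2} is imported verbatim from Yeadon \cite{y2}, so your proposal has to be measured against Yeadon's argument, whose skeleton --- weak $(1,1)$ maximal inequality, b.a.u.\ convergence on a dense subspace, bilateral Banach-principle transfer --- you reproduce correctly; your maximal inequality is precisely Yeadon's lemma, and your transfer step is sound (modulo citing \cite{cl} for ``b.a.u.\ Cauchy implies b.a.u.\ convergent'' and a Fatou-type result such as Theorem 1.2 of \cite{cls} to place the limit $\hat x$ in $\mc L^1$). The genuine gap is your density step. Theorem \ref{Theorem 2} lives under the standing hypotheses of Section 1, where $\tau$ is only \emph{semifinite}, and in that generality the span $\mc D$ of the fixed points and the coboundaries $\{y-\al(y): y\in \mc M\cap \mc L^1\}$ is \emph{not} $\| \cdot \|_1$-dense in $\mc L^1$. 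Concretely, take $\mc M=\ell^\ii(\Bbb Z)$ with the counting trace and $\al$ the shift, a legitimate instance of the hypotheses: the fixed subspace of $\mc L^1=\ell^1(\Bbb Z)$ is $\{0\}$, while $\tau(y-\al(y))=0$ for every $y\in\ell^1(\Bbb Z)$, so the $\| \cdot \|_1$-closure of $\mc D$ lies in the kernel of the bounded functional $\tau$ and misses, e.g., $\delta_0$ (whose averages nevertheless converge --- uniformly, in fact --- so the theorem is true but unreachable by this route). Your proposed justification via the mean ergodic theorem in $\mc L^2$ only yields $\| \cdot \|_2$-density, which does not transfer to $\| \cdot \|_1$ when $\tau(\Bbb I)=\ii$, and the ``routine duality step'' fails for exactly this reason: the annihilator of $\mc D$ in $\mc M=(\mc L^1)^*$ contains $\Bbb I$ whenever $\tau\circ\al=\tau$ and the fixed space in $\mc L^1$ vanishes. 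This is the classical infinite-measure pitfall of Hopf's dense-subspace argument, which non-commutativity does nothing to remove.

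Note that your argument \emph{is} complete when $\tau$ is finite, since then $\| x\|_1\leq c\| x\|_2$ and the $\mc L^2$-splitting does give $\| \cdot \|_1$-density; this covers everything the present paper actually uses, because from Section \ref{3} on $\tau$ is a state. To prove Theorem \ref{Theorem 2} in its stated semifinite generality, the standard repair is to run the dense-subspace argument inside $\mc L^2$ rather than $\mc L^1$: operator convexity of $t\mapsto t^2$ together with the Kadison--Schwarz inequality for the positive contractive map $\al$ gives $a_n(x)^2\leq a_n(x^2)$ for $x\ge 0$, so Yeadon's weak $(1,1)$ inequality applied to $x^2\in\mc L^1$ yields a projection $e$ with $\tau(e^\perp)\leq \| x\|_2^2/\ep^2$ and $\sup_n\| a_n(x)e\|_\ii\leq\ep$ --- a weak $(2,2)$-type maximal bound obtained for free. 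On the $\mc L^2$-coboundaries $y-\al(y)$, $y\in\mc L^2$, one has $a_n(y-\al(y))=\frac 1n (y-\al^n(y))$, and $\sum_n n^{-2}\| \al^n(y)\|_2^2<\ii$ forces $\frac 1n \al^n(y)\to 0$ a.u.\ by the standard square-summability lemma; combined with the $\mc L^2$-maximal bound this gives b.a.u.\ convergence on all of $\mc L^2$, and then the $\| \cdot \|_1$-density of $\mc L^1\cap\mc L^2$ in $\mc L^1$ plus your weak $(1,1)$ transfer step finishes the proof. With that substitution (or with the theorem restated for finite $\tau$) your outline becomes a correct reconstruction of the known argument.
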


B.a.u. convergence of the averages (2) for $x\in \mc L^1$ and a fixed $\lb \in \Bbb C_1$ was proved in \cite{cls}. 

\section{Non-commutative Wiener-Wintner property}

Now we turn
our attention to a study of the  "simultaneous" on $\Bbb C_1$  individual convergrnce of the averages (2). 
We begin with the following definition; see \cite{li}.

\vskip 5pt
\begin{df}
Let $(X, \| \cdot \|)$ be a normed space. A sequence $a_n:X\to \mathcal L$ of additive maps is called {\it bilaterally uniformly equicontinuous
in measure (b.u.e.m.) at $0\in X$} if for every $\ep>0$, $\dt>0$ there exists $\gm>0$ such that for every $x\in X$ with $\| x\|<\gm$ there is 
$e_x\in P(\mathcal M)$ for which
$$
\tau(e_x^{\perp})\leq \ep \ \  \text {and} \ \ \sup_n\| e_xa_n(x)e_x\|_{\ii}\leq \dt.
$$
\end{df}

A proof of the following fact can be found in \cite{li}.

\begin{pro}\label{Proposition 0}
For any $1\leq p<\ii$ the sequence $\{ a_n\}$ given by (1) is b.u.e.m. at $0\in \mathcal L^p$.
\end{pro}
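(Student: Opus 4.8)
The proof rests on Yeadon's weak-type $(1,1)$ maximal ergodic inequality: there is a constant $C>0$ such that for every $z\in\mc L^1$ with $z\geq 0$ and every $\lb>0$ there is a projection $e\in P(\mc M)$ with $\tau(e^\perp)\leq C\lb^{-1}\|z\|_1$ and $\sup_n\|e\,a_n(z)\,e\|_\ii\leq\lb$. Each $a_n$ is linear, hence additive, so the hypothesis of the definition is met; the plan is to reduce the $\mc L^p$ statement to this $\mc L^1$ inequality by a spectral truncation, and then to reduce the general case to positive elements.

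First I would treat $x\in\mc L^p$ with $x\geq 0$. Fix $\ep,\dt>0$ and set $t=\dt/2$. Using the spectral decomposition of $x$, let $q$ be the spectral projection of $x$ for the interval $(t,\ii)$ and split $x=y+z$ with $y=x(\Bbb I-q)$ and $z=xq$. Then $0\leq y\leq t\,\Bbb I$, so $y\in\mc M$, and since $\al$ contracts the uniform norm on positive elements of $\mc M$ we get $\sup_n\|a_n(y)\|_\ii\leq\|y\|_\ii\leq t=\dt/2$. For the tail, a Chebyshev estimate gives $\tau(q)\leq\|x\|_p^p\,t^{-p}$, and the non-commutative H\"older inequality yields $\|z\|_1=\tau(xq)\leq\|x\|_p\,\tau(q)^{1-1/p}\leq\|x\|_p^p\,t^{-(p-1)}$, so $z\in\mc L^1$.

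Applying Yeadon's inequality to $z$ at level $\dt/2$ produces $e_x\in P(\mc M)$ with $\sup_n\|e_x a_n(z)e_x\|_\ii\leq\dt/2$ and $\tau(e_x^\perp)\leq 2C\dt^{-1}\|z\|_1\leq 2^p C\,\dt^{-p}\|x\|_p^p$. Writing $a_n(x)=a_n(y)+a_n(z)$ and using $\|e_x a_n(y)e_x\|_\ii\leq\|a_n(y)\|_\ii$, I obtain $\sup_n\|e_x a_n(x)e_x\|_\ii\leq\dt$. Choosing $\gm=\tfrac{\dt}{2}(\ep/C)^{1/p}$ then forces $\tau(e_x^\perp)\leq\ep$ whenever $\|x\|_p<\gm$, which is exactly the required property for positive $x$.

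Finally, for arbitrary $x\in\mc L^p$ I would decompose $x=(x_1-x_2)+i(x_3-x_4)$ into the positive and negative parts of its real and imaginary components, noting $\|x_j\|_p\leq\|x\|_p$. Applying the positive case to each $x_j$ with thresholds $\ep/4$ and $\dt/4$ yields projections $e_1,\dots,e_4$; setting $e_x=e_1\wedge\dots\wedge e_4$ gives $\tau(e_x^\perp)\leq\sum_j\tau(e_j^\perp)\leq\ep$, while $e_x\leq e_j$ and the estimate $\|e_x a_n(x_j)e_x\|_\ii\leq\|e_j a_n(x_j)e_j\|_\ii$ give $\sup_n\|e_x a_n(x)e_x\|_\ii\leq\dt$. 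The only genuinely hard ingredient is Yeadon's maximal inequality; once it is in hand, the spectral truncation and the passage $\mc L^1\to\mc L^p$ are the main technical maneuver, the remainder being routine bookkeeping with projections and norms.
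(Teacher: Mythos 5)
Your proof is correct. Note that the paper itself gives no argument for this proposition, deferring instead to the reference \cite{li}; your reconstruction via Yeadon's weak type $(1,1)$ maximal ergodic inequality, a spectral truncation of a positive $x\in\mc L^p$ at level $\dt/2$ combined with the Chebyshev bound $\tau(q)\leq \|x\|_p^p t^{-p}$ and H\"older to place the tail in $\mc L^1$, and the final reduction of a general $x$ to four positive parts is precisely the standard route to this result, and all the quantitative bookkeeping checks out (in particular $\tau(e_x^\perp)\leq 2^p C\dt^{-p}\|x\|_p^p$ and the choice $\gm=\tfrac{\dt}{2}(\ep/C)^{1/p}$ are consistent). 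The only steps you use silently --- that the $\mc L^p$-, $\mc L^1$- and ultraweakly continuous $\mc M$-extensions of $\al$ agree on intersections, so that $a_n(x)=a_n(y)+a_n(z)$ is legitimate, and that the $\mc M$-extension is a positive contraction, giving $\|a_n(y)\|_\ii\leq\|y\|_\ii\leq\dt/2$ --- are standard facts from Yeadon's construction already quoted in the paper's preliminaries, so they are fair to assume rather than genuine gaps.
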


\begin{lm}\label{Proposition 1}
If $1\leq p< \ii$, then, given $\ep>0$, $\dt>0$, there exists $\gm>0$ such that 
for every $x\in \mc L^p$ with $\| x \|_p\leq \gm$ there is $e\in P(\mc M)$ satisfying 
$$
\tau(e^{\perp})\leq \ep \ \ \text{and} \ \ \sup_n\| ea_n(x,\lb)e\|_{\ii}\leq \dt \ \ \text{for all}  \ \ \lb\in \Bbb C_1.
$$
\end{lm}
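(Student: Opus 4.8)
The plan is to reduce the weighted averages $a_n(x,\lb)$ to the unweighted averages $a_n(x)$, for which Proposition~\ref{Proposition 0} already supplies the desired behavior, and to carry out this reduction by a domination that is uniform in $\lb$. The crucial observation is that for a \emph{positive} element $y\in\mc L^p$, $y\geq 0$, each iterate $\al^k(y)$ is positive (here the positivity of $\al$ is used), so for any $e\in P(\mc M)$ the compressions $e\al^k(y)e$ are positive. Separating real and imaginary parts, one has
$$
e\,\mathrm{Re}\,a_n(y,\lb)\,e=\frac1n\sum_{k=0}^{n-1}\mathrm{Re}(\lb^k)\,e\al^k(y)e,
$$
a real-linear combination of the positive operators $e\al^k(y)e$ with coefficients in $[-1,1]$; hence $-\,e a_n(y)e\leq e\,\mathrm{Re}\,a_n(y,\lb)\,e\leq e a_n(y)e$, and likewise for the imaginary part. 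Since $e a_n(y)e\geq0$ is self-adjoint, this gives $\|e\,\mathrm{Re}\,a_n(y,\lb)\,e\|_{\ii}\leq\|e a_n(y)e\|_{\ii}$ and the same for $\mathrm{Im}$, whence
$$
\sup_{\lb\in\Bbb C_1}\ \|e a_n(y,\lb)e\|_{\ii}\ \leq\ 2\,\|e a_n(y)e\|_{\ii}\qquad\text{for every }n,
$$
with a \emph{single} projection $e$ serving all $\lb$ at once. This is the heart of the matter.

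Next I would remove the positivity assumption on $x$ via the four-fold decomposition $x=(x_1-x_2)+i(x_3-x_4)$ into positive parts, each satisfying $x_j\geq 0$ and $\|x_j\|_p\leq\|x\|_p$ (using $\|\mathrm{Re}\,x\|_p\leq\|x\|_p$ and $a_+\leq|a|$ for self-adjoint $a$). Given $\ep,\dt>0$, apply Proposition~\ref{Proposition 0} with parameters $\ep/4$ and $\dt/8$ to produce $\gm>0$ such that each $x_j$ with $\|x_j\|_p\leq\gm$ admits $e_j\in P(\mc M)$ with $\tau(e_j^{\perp})\leq\ep/4$ and $\sup_n\|e_j a_n(x_j)e_j\|_{\ii}\leq\dt/8$. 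Setting $e=\bigwedge_{j=1}^4 e_j$, subadditivity of $\tau$ on the join of the complements yields $\tau(e^{\perp})=\tau\big(\bigvee_j e_j^{\perp}\big)\leq\sum_j\tau(e_j^{\perp})\leq\ep$, and since $e\leq e_j$ forces $ee_j=e$ with $\|e\|_{\ii}\leq 1$, one gets $\|e a_n(x_j)e\|_{\ii}\leq\|e_j a_n(x_j)e_j\|_{\ii}\leq\dt/8$ for all $n$.

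Combining the two steps, for any $x$ with $\|x\|_p\leq\gm$ and any $\lb\in\Bbb C_1$,
$$
\|e a_n(x,\lb)e\|_{\ii}\leq\sum_{j=1}^4\|e a_n(x_j,\lb)e\|_{\ii}\leq 2\sum_{j=1}^4\|e a_n(x_j)e\|_{\ii}\leq 8\cdot\tfrac{\dt}{8}=\dt,
$$
uniformly in $n$ and $\lb$, which is exactly the assertion. The only genuine obstacle is the uniform-in-$\lb$ domination of the first paragraph; once positivity converts the oscillating weights into a comparison with the unweighted averages, everything else is routine bookkeeping with projection lattices and trace subadditivity. The points I would verify carefully are that each $x_j$ truly lies in $\mc L^p$ with $\|x_j\|_p\leq\|x\|_p$, and that the estimate is applied correctly to the (non-self-adjoint) operator $e a_n(y,\lb)e$ through its real and imaginary parts.
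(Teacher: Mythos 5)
Your proposal is correct and follows essentially the same route as the paper: decompose $x$ into four positive parts with $\|x_j\|_p\leq\|x\|_p$, apply Proposition~\ref{Proposition 0} to each, take the meet $e=\land_{j=1}^4 e_j$, and exploit positivity of the compressions $e\al^k(x_j)e$ to dominate the weighted averages by the unweighted ones uniformly in $\lb$. The only cosmetic difference is in the domination step — you sandwich $e\,\mathrm{Re}\,a_n(x_j,\lb)\,e$ between $\pm\,e a_n(x_j)e$ directly, whereas the paper shifts the coefficients to $\mathrm{Re}(\lb^k)+1\in[0,2]$ to get $0\leq e a_n^{(R)}(x_j,\lb)e\leq 2ea_n(x_j)e$ — an equivalent computation that merely changes the constants ($\dt/8$ versus $\dt/24$).
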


\begin{proof}
Fix $\ep>0$, $\dt>0$. By Proposition \ref{Proposition 0}, there esists $\gm>0$ such that for each $\| x\|_p<\gm$ it is possible to
find $e\in P(\mc M)$ such that
$$
\tau(e^{\perp})\leq \frac \ep 4 \ \ \text{and} \ \ \sup_n\| ea_n(x)e\|_{\ii}\leq \frac \dt {24}.
$$
Fix $x\in \mc L^p$ with $\| x\|_p<\gm$. We have $x=(x_1-x_2)+i(x_3-x_4)$, where $x_j\in \mc L^p_+$ and 
$\| x_j\|_p\leq \| x\|_p$ for each $j=1,2,3,4$.

\noindent
If $1\leq j\leq 4$, then $\| x_j\|_p<\gm$, so there is $e_j\in P(\mc M)$ safisfying
$$
\tau(e_j^{\perp})\leq \frac \ep 4 \ \ \text{and} \ \ \sup_n\| e_ja_n(x_j)e_j\|_{\ii}\leq \frac \dt {24}.
$$
Let $e=\land_{j=1}^4e_j$. Then we have 
$$
\tau(e^{\perp})\leq \ep \ \ \text{and} \ \ \sup_n\| ea_n(x_j)e\|_{\ii}\leq \frac \dt {24}, \  j=1,2,3,4.
$$

Now, fix $\lb \in \Bbb C_1$. For $1\leq j\leq 4$ denote
$$
a_n^{(R)}(x_j,\lb)=\frac 1 n \sum_{k=0}^{n-1}Re(\lb^k)\al^k(x_j)+a_n(x_j)=\frac 1 n \sum_{k=0}^{n-1}(Re(\lb^k)+1)\al^k(x_j),
$$
$$
a_n^{(I)}(x_j,\lb)=\frac 1 n \sum_{k=0}^{n-1}Im(\lb^k)\al^k(x_j)+a_n(x_j)=\frac 1 n \sum_{k=0}^{n-1}(Im(\lb^k)+1)\al^k(x_j).
$$
Then $0\leq Re(\lb^k)+1\leq 2$ and $\al^k(x_j)\ge 0$ for every $k$ entail
$$
0\leq ea_n^{(R)}(x_j,\lb)e\leq 2ea_n(x_j)e \ \ \text{for all} \ n.
$$
Therefore
$$
\sup_n \left \| ea_n^{(R)}(x_j,\lb)e \right \|_{\ii}\leq \frac \dt {12}
$$
and, similarly,
$$
\sup_n \left \| ea_n^{(I)}(x_j,\lb)e \right \|_{\ii}\leq \frac \dt {12}.
$$
This implies that, given $1\leq j\leq 4$, we have
$$
\sup_n \left \| ea_n(x_j,\lb)e \right \|_{\ii}=
$$
$$
=\sup_n \left \| e\left (a_n^{(R)}(x_j,\lb)+ia_n^{(I)}(x_j,\lb)-a_n(x_j)-ia_n(x_j)\right )e \right \|_{\ii}\leq \frac \dt 4,
$$
and we conclude that
$$
\sup_n \left \| ea_n(x,\lb)e \right \|_{\ii}=
$$
$$
=\sup_n\left \| e\left (a_n(x_1,\lb)-a_n(x_2,\lb)+ia_n(x_3,\lb)-ia_n(x_4,\lb)\right )e\right \|_{\ii}\leq \dt
$$
for every $\lb \in \Bbb C_1$.
\end{proof}

\begin{df}\label{df 2}
Let $1\leq p<\ii$. We say that $x\in \mc L^p$ satisfies {\it Wiener-Wintner (bilaterally Wiener-Wintner) property} 
and we write $x\in WW$ ($x\in bWW$, respectively) if,
given $\ep>0$, there exists a projection $p\in P(\mc M)$ with $\tau(p^{\perp})\leq \ep$ such that the sequence
$$
\{a_n(x,\lb)p\} \  \ ( \{pa_n(x,\lb)p\}, \ \text{respectively})  \ \ \text{converges in} \ \mc M \ \text{for all} \ \lb \in \Bbb C_1.
$$
\end{df}

Note that $WW\su bWW$, while in the commutative case these sets coinside. 

\noindent
Let $(\Om,\mu)$ be a probability space, and let $T:\Om \to \Om$ be a measure preserving transformation. Then $f\in L^1(\Om, \mu)\cap WW$
would imply that for every $m \in \Bbb N$ there exists $\Om_m$ with $\mu (\Om \setminus \Om_m)\leq \frac 1m$ such that the averages
$a_n(f,\lb)(\om)=\frac 1n \sum_{k=0}^{n-1} \lb^k f(T^k\om)$ converge for all $\om \in \Om_m$ and $\lb \in \Bbb C_1$. Then, with
$\Om_f=\cup_{m=1}^\ii \Om_m$, we have $\mu (\Om_f)=1$, while the averages $a_n(f,\lb)(\om)$ converge for all $\om \in \Om_f$ and
$\lb \in \Bbb C_1$.

Therefore Definition \ref{df 2} presents a proper generalization of the classical Wiener-Wintner property; see \cite{as}, p.28. In an attempt to clarify 
what happens in the non-commutative situation without imposing any additional conditions on $\tau$ and $\al$, we suggest the following.
 
\begin{pro}\label{Proposition 7}
Let $1\leq p<\ii$ and $x\in \mc L^p \cap WW$ ($x\in \mc L^p \cap bWW$). Then 

\begin{enumerate}
\item
for every $\lb \in \Bbb C_1$ there is such $x_\lb \in \mc L^p$ that
$$
a_n(x,\lb)\to x_\lb \ \text{a.u.} \ \ (a_n(x,\lb)\to x_\lb \ \text{b.a.u.}, \ \text{respectively}),
$$
\item
if $p\in P(\mc M)$ is such that
$\{ a_n(x,\lb)p\}$ ($\{ pa_n(x,\lb)p\}$) converges in $\mc M$ for all $\lb \in \Bbb C_1$, then, given
$\lb \in \Bbb C_1$ and $\nu>0$, there is a projection $p_\lb\in P(\mc M)$ such that $p_\lb \leq p$, $\tau(p-p_\lb) \leq \nu$, and
$$
\left \|(a_n(x,\lb)-x_\lb)p_\lb \right \|_\ii \to 0 \ \ (\left \|p_\lb(a_n(x,\lb)-x_\lb)p_\lb \right \|_\ii \to 0, \ \text{respectively}).
$$
\end{enumerate}
\end{pro}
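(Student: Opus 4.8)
The plan is to reduce both assertions to three ingredients: completeness of $(\mc L,t_\tau)$ (Theorem \ref{Theorem 1}), joint continuity of multiplication in the measure topology, and the fixed-$\lb$ b.a.u. convergence of \cite{cls}. I first record that the averages are uniformly bounded in $\mc L^p$: writing $x=(x_1-x_2)+i(x_3-x_4)$ with $x_j\in\mc L^p_+$ and $\|x_j\|_p\le\|x\|_p$, and using that $\al$ is an $\mc L^p$-contraction on self-adjoint elements, one obtains $\|a_n(x,\lb)\|_p\le 4\|x\|_p$ for all $n$ and all $\lb\in\Bbb C_1$.

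For (1) in the $WW$ case, fix $\lb$. For each $k\in\Bbb N$ the $WW$ property (with $\ep=1/k$) yields $e_k\in P(\mc M)$, $\tau(e_k^\perp)\le 1/k$, with $\{a_n(x,\lb)e_k\}_n$ convergent, hence norm-Cauchy, in $\mc M$. Since the estimate $\|(a_n(x,\lb)-a_m(x,\lb))e_k\|_\ii\le\dt$ is one-sided, it places the differences in $V(1/k,\dt)$, so $\{a_n(x,\lb)\}_n$ is $t_\tau$-Cauchy; by Theorem \ref{Theorem 1} it converges in measure to some $x_\lb\in\mc L$, and the uniform $\mc L^p$-bound with the Fatou property of $\|\cdot\|_p$ gives $x_\lb\in\mc L^p$. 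Fixing $k$, continuity of right multiplication by $e_k$ yields $a_n(x,\lb)e_k\to x_\lb e_k$ in measure, while the same sequence converges in $\mc M$, hence in measure, to its $\mc M$-limit; uniqueness forces that limit to equal $x_\lb e_k$, so $\|(a_n(x,\lb)-x_\lb)e_k\|_\ii\to 0$. As $\tau(e_k^\perp)\le 1/k$, this is a.u. convergence $a_n(x,\lb)\to x_\lb$. In the $bWW$ case (1) is immediate from \cite{cls}, which supplies the b.a.u. limit $x_\lb$; membership $x_\lb\in\mc L^p$ follows from the uniform $\mc L^p$-bound, via the mean ergodic theorem for $1<p<\ii$ and trivially for $p=1$.

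For (2) I combine (1) with the prescribed $p$. In the $WW$ case $\{a_n(x,\lb)p\}$ converges in $\mc M$ by hypothesis while $a_n(x,\lb)\to x_\lb$ in measure by (1); continuity of multiplication identifies the $\mc M$-limit as $x_\lb p$, so $\|(a_n(x,\lb)-x_\lb)p\|_\ii\to 0$ and one may take $p_\lb=p$. In the $bWW$ case, (1) furnishes, for the given $\nu>0$, a projection $g$ with $\tau(g^\perp)\le\nu$ and $\|g(a_n(x,\lb)-x_\lb)g\|_\ii\to 0$; putting $p_\lb=p\wedge g\le p$ gives $\tau(p-p_\lb)=\tau(p\vee g)-\tau(g)\le\tau(g^\perp)\le\nu$ and, since $p_\lb\le g$, $\|p_\lb(a_n(x,\lb)-x_\lb)p_\lb\|_\ii\le\|g(a_n(x,\lb)-x_\lb)g\|_\ii\to 0$.

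The substantive step is the a.u. statement in (1): the $WW$ hypothesis provides only norm convergence of the compressions $a_n(x,\lb)e_k$, and one must build from these a single global limit $x_\lb\in\mc L^p$ with a.u. convergence. What makes this work is precisely the one-sidedness of the $WW$ compressions, which matches the one-sided neighborhoods $V(\ep,\dt)$ and hence yields a genuine measure-Cauchy sequence; completeness then produces $x_\lb$, and continuity of multiplication identifies the compressed limits. The remaining delicate point is upgrading $x_\lb\in\mc L$ to $x_\lb\in\mc L^p$, for which the Fatou property is used. The $bWW$ parts are comparatively soft, relying on \cite{cls} and on intersecting with $p$; the two-sidedness of b.a.u. control is exactly why the small loss $\tau(p-p_\lb)\le\nu$ cannot be avoided there.
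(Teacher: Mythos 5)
The one genuine gap is in the $bWW$ branch of part (1). You settle it by citing \cite{cls}, but the fixed-$\lb$ theorem of \cite{cls} gives b.a.u.\ convergence of $a_n(x,\lb)$ only for $x\in\mc L^1$, whereas Proposition \ref{Proposition 7} sits in Section 2 of the paper, where $\tau$ is merely semifinite: if $\tau(\Bbb I)=\ii$ and $1<p<\ii$, then $\mc L^p\not\su\mc L^1$, so the citation does not reach the stated hypotheses. Your supplementary remark about the mean ergodic theorem for $1<p<\ii$ only addresses the membership $x_\lb\in\mc L^p$, not the b.a.u.\ convergence itself, and ``trivially for $p=1$'' is again just the \cite{cls} theorem; moreover this route discards the $bWW$ hypothesis, which is exactly what the paper uses. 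The repair is the bilateral version of your own one-sided argument: from $x\in bWW$ pick, for each $k$, a projection $e_k$ with $\tau(e_k^\perp)\le 1/k$ such that $\{e_ka_n(x,\lb)e_k\}_n$ is $\|\cdot\|_\ii$-Cauchy; this places the differences $a_n(x,\lb)-a_m(x,\lb)$ in the \emph{bilateral} neighborhoods of zero, and since by Theorem 2.2 of \cite{cls} the bilateral measure topology coincides with $t_\tau$ on $\mc L$, the sequence is again Cauchy in measure. Completeness (Theorem \ref{Theorem 1}) produces $x_\lb\in\mc L$, joint continuity of multiplication identifies the limit of $e_ka_n(x,\lb)e_k$ as $e_kx_\lb e_k$, so $\|e_k(a_n(x,\lb)-x_\lb)e_k\|_\ii\to 0$ and the convergence is b.a.u., after which your Fatou argument gives $x_\lb\in\mc L^p$. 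This is precisely what the paper's appeal to Proposition 1.3 of \cite{cl} packages; as written, your $bWW$ case of (1) is unproved for $1<p<\ii$.

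Everything else is correct and is essentially the paper's own proof. Your $WW$ argument for (1) is a hands-on rendering of the one-sided analogue of \cite{cl}, Proposition 1.3 (which the paper invokes with the comment that the same argument applies in the a.u.\ case), and your use of the Fatou property to get $x_\lb\in\mc L^p$ from the uniform bound on $\|a_n(x,\lb)\|_p$ is the paper's citation of Theorem 1.2 in \cite{cls} (note the paper's bound $\|a_n(x,\lb)\|_p\le 2\|x\|_p$ makes your detour through the decomposition $x=(x_1-x_2)+i(x_3-x_4)$ unnecessary, though your constant $4$ is harmless). For (2), the paper takes $p_\lb=p\land e_\lb$ exactly as you take $p\land g$; your further observation that in the $WW$ case one may take $p_\lb=p$ with no $\nu$-loss is correct, since the $\mc M$-limit of $a_n(x,\lb)p$ must agree with $x_\lb p$ by uniqueness of limits in measure, and is slightly stronger than what is claimed. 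One small repair: justify $\tau(p-p\land g)\le\tau(g^\perp)$ via the parallelogram law $p-p\land g\sim p\lor g-g\le g^\perp$ rather than by the difference $\tau(p\lor g)-\tau(g)$, which can be $\ii-\ii$ under a semifinite trace.
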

\begin{proof} We will provide a proof for the b.a.u. convergence. Same argument is applicable for the a.u. convergence.

\noindent
(1) Let $x\in \mc L^p\cap bWW$ and $\lb \in \Bbb C_1$. Then for every $\ep>0$ there exists $p\in P(\mc M)$ with $\tau(p^\perp)\leq \ep$ for which
$$
\| p(a_m(x,\lb)-a_n(x,\lb))p\|_\ii \to 0 \ \ \text{as} \ m,n\to \ii.
$$
Then, as it is noticed in \cite{cl}, Proposition 1.3, $a_n(x,\lb)\to x_\lb$ b.a.u. for some $x_\lb \in \mc L$,
which clearly implies that $a_n(x,\lb)\to x_\lb$ {\it bilaterally in measure}, meaning that, given $\ep>0$, $\dt>0$, there exists $N\in \Bbb N$
such that for every $n\ge \Bbb N$ there is $e_n\in P(\mc M)$ with $\tau(e_n^\perp)\leq \ep$ satisfying 
$\left \| e_n(a_n(x,\lb)-x_\lb)e_n \right \|_\ii \leq \dt$. Since the measure topology coinsides with the bilateral measure topology on $\mc L$
( \cite{cls}, Theorem 2.2), we have $a_n(x,\lb)\to x_\lb$ in measure. Then, since 
$\| a_n(x,\lb)\|_p\leq 2\| x\|_p$ for every $n$, by Theorem 1.2 in \cite{cls}, $x_\lb \in \mc L^p$.

\vskip 5pt
\noindent
(2) Let $p\in P(\mc M)$ be such that the sequence $\{ pa_n(x,\lb)p\}$ converges in $\mc M$
for all $\lb \in \Bbb C_1$. By (1), given $\lb \in \Bbb C_1$ and $\nu>0$, there is $e_\lb \in P(\mc M)$ with $\tau(e_\lb^\perp)\leq \nu$ such that
$\left \|e_\lb(a_n(x,\lb)-x_\lb)e_\lb \right \|_\ii \to 0$ as $n\to \ii$. Then $p_\lb=p\land e_\lb$ satisfies the required conditions.
\end{proof}

\begin{rem}\label{Remark 1}
It is desirable to have the following: if $x\in WW$ ($x\in bWW$), then, given $\ep>0$, there exists such $p\in P(\mc M)$
with $\tau(p^\perp)\leq \ep$ that $\| (a_n(x,\lb)-x_\lb)p \|_\ii \to 0$ ($\| p(a_n(x,\lb)-x_\lb)p \|_\ii \to 0$, respectively) for all $\lb \in \Bbb C_1$; 
see Remark \ref{Remark 2} below.
\end{rem}

\begin{teo}\label{Theorem 4}
For each $1\leq p<\ii$ the set $X=\mc L^p\cap bWW$ is closed in $\mc L^p$.
\begin{proof}
Take $x$ in the $\| \cdot \|_p-$closure of $X$ and fix $\ep>0$. By Lemma \ref{Proposition 1}, 
one can find sequences $\{ x_m\}\su X$ and $\{ e_m\}\su P(\mc M)$ in such a way that
$$
\tau(e_m^\perp)\leq \frac \ep {3\cdot 2^m}\ \ \ \text{and} \ \ \sup_n\| e_ma_n(x-x_m,\lb)e_m\|_\ii \leq \frac 1 m
$$
for all $m\in \Bbb N$ and $\lb \in \Bbb C_1$. If we let $e=\land_{m=1}^\ii e_m$, then 
$$
\tau(e^\perp)\leq \frac \ep 3\ \ \ \text{and} \ \ \sup_n\| ea_n(x-x_m,\lb)e\|_\ii \leq \frac 1 m,
$$
$m\in \Bbb N$, $\lb \in \Bbb C_1$. Also, since $\{ x_m\} \su bWW$, one can construct $f\in P(\mc M)$ such that
$$
\tau(f^\perp)\leq \frac \ep 3  \ \ \text{and} \ \ \{fa_n(x_m,\lb)f\} \ \ \text{converges in} \ \mc M \ \ \text{for all} \ m\in \Bbb N, \ \lb \in \Bbb C_1.
$$
Next, there exists $g\in P(\mc M)$ with $\tau(g^\perp)\leq \frac \ep 3$ for which $\{\al^k(x)g\}_{k=0}^\ii \su \mc M$ 
so that $\{ga_n(x,\lb)g\}\su \mc M$ for all $\lb \in \Bbb C_1$.
Now, if $p=e\land f\land g$, then we have $\tau(p^\perp)\leq \ep$,
$$
\sup_n\|pa_n(x-x_m, \lb)p\|_\ii \leq \frac 1 m,
$$
$$
\{pa_n(x_m,\lb)p\}  \ \text{converges in} \ \mc M, 
$$
$$
\text{and} \ \{pa_n(x,\lb)p\} \su \mc M 
$$
for all $m\in \Bbb N$ and $\lb \in \Bbb C_1$.

It remains to show that, for a fixed $\lb \in \Bbb C_1$, the sequence $\{pa_n(x,\lb)p\}$ converges in $\mc M$. So, fix $\dt>0$ and pick $m_0$
such that $\frac 1 {m_0}\leq \frac \dt 3$. Since the sequence $\{pa_n(x_{m_0},\lb)p\}$ converges in $\mc M$, there exists $N$ such that 
$$
\left \| p(a_{n_1}(x_{m_0},\lb)-a_{n_2}(x_{m_0},\lb))p\right \|_\ii \leq \frac \dt 3
$$
whenever $n_1,n_2\ge N$.
Therefore, given $n_1,n_2\ge N$, we can write
$$
\left \| p(a_{n_1}(x,\lb)-a_{n_2}(x,\lb))p\right \|_\ii \leq \left \| pa_{n_1}(x-x_{m_0},\lb)p\right \|_\ii +
$$
$$
+\left \| pa_{n_2}(x-x_{m_0},\lb)p\right \|_\ii + \left \| p(a_{n_1}(x_{m_0},\lb)-a_{n_2}(x_{m_0},\lb))p\right \|_\ii \leq \dt.
$$
This implies that the sequence $\{pa_n(x,\lb)p\}$ converges in $\mc M$ for all $\lb \in \Bbb C_1$, hence $x\in X$ and $X$ is closed in $\mc L^p$.
\end{proof}
\end{teo}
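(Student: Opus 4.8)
The plan is to run a three-$\ep$ approximation scheme of the kind standard in these equicontinuity (Banach-principle) arguments: approximate $x$ by elements of $X$ in $\mc L^p$-norm, absorb the resulting approximation error with a single projection furnished by Lemma \ref{Proposition 1}, and then transfer the $\mc M$-convergence from the approximants to $x$ by a Cauchy estimate. The decisive feature that makes this possible is that the estimate in Lemma \ref{Proposition 1} is \emph{uniform in} $\lb \in \Bbb C_1$; this is precisely what lets one error-controlling projection serve every $\lb$ simultaneously.

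First I would take $x$ in the $\| \cdot \|_p$-closure of $X$, fix $\ep>0$, and pick a sequence $\{x_m\}\su X$ with $\| x-x_m\|_p\to 0$. Applying Lemma \ref{Proposition 1} to each difference $x-x_m$, with tolerance $\frac{\ep}{3\cdot 2^m}$ on the trace and $\frac 1m$ on the sup-norm, yields projections $e_m$ with $\tau(e_m^\perp)\leq \frac{\ep}{3\cdot 2^m}$ and $\sup_n\| e_m a_n(x-x_m,\lb)e_m\|_\ii\leq \frac 1m$ for all $\lb$. Setting $e=\land_{m=1}^\ii e_m$ gives $\tau(e^\perp)\leq \frac\ep3$, and since $e\leq e_m$ the bound $\sup_n\| e a_n(x-x_m,\lb)e\|_\ii\leq \frac 1m$ survives.

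Next I would manufacture two further projections. Because each $x_m\in bWW$, intersecting the projections supplied by the $bWW$-property of the countably many $x_m$ (again with summable trace defects totalling at most $\frac\ep3$) produces a single $f$ with $\tau(f^\perp)\leq\frac\ep3$ for which $\{fa_n(x_m,\lb)f\}$ converges in $\mc M$ for every $m$ and every $\lb$; here I rely on the fact that $f\leq f_m$ makes $fa_n(x_m,\lb)f=f\,[f_ma_n(x_m,\lb)f_m]\,f$ still convergent. Since $x\in\mc L^p$ need not be bounded, I would also select $g$ with $\tau(g^\perp)\leq\frac\ep3$ so that $\{\al^k(x)g\}\su\mc M$, ensuring the compressed averages $ga_n(x,\lb)g$ genuinely lie in $\mc M$. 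Putting $p=e\land f\land g$ gives $\tau(p^\perp)\leq\ep$ together with the approximation bound $\sup_n\| pa_n(x-x_m,\lb)p\|_\ii\leq\frac 1m$, the convergence of $\{pa_n(x_m,\lb)p\}$ in $\mc M$, and $\{pa_n(x,\lb)p\}\su\mc M$.

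Finally, fixing $\lb$ and $\dt>0$, I would choose $m_0$ with $\frac1{m_0}\leq\frac\dt3$ and estimate, for $n_1,n_2$ past the Cauchy threshold of $\{pa_n(x_{m_0},\lb)p\}$,
\begin{align*}
\| p(a_{n_1}(x,\lb)-a_{n_2}(x,\lb))p\|_\ii
&\leq \| pa_{n_1}(x-x_{m_0},\lb)p\|_\ii + \| pa_{n_2}(x-x_{m_0},\lb)p\|_\ii \\
&\quad + \| p(a_{n_1}(x_{m_0},\lb)-a_{n_2}(x_{m_0},\lb))p\|_\ii \leq \dt,
\end{align*}
the first two terms being controlled by the approximation bound and the third by convergence of the approximant. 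Hence $\{pa_n(x,\lb)p\}$ is Cauchy in $\mc M$ for every $\lb$, so $x\in bWW$ and $X$ is closed. The main obstacle I anticipate is the projection bookkeeping: one must verify that the $bWW$-property can be invoked \emph{uniformly across the whole sequence} $\{x_m\}$ so that a single $f$ works for all $m$ at once, and that the uniform-in-$\lb$ control of Lemma \ref{Proposition 1} is preserved under the passage to the infimum $e=\land_{m}e_m$.
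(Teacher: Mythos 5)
Your proposal is correct and follows essentially the same route as the paper's own proof: the same application of Lemma \ref{Proposition 1} with trace tolerances $\frac{\ep}{3\cdot 2^m}$, the same three projections $e$, $f$, $g$ meeting in $p=e\land f\land g$, and the same three-term Cauchy estimate with $\frac{1}{m_0}\leq\frac{\dt}{3}$. The points you flag as potential obstacles (a single $f$ serving all $x_m$ via summable defects, and the bound surviving the infimum $e=\land_m e_m$) are handled exactly as you describe, so there is nothing to repair.
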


Let $\mc K$ be the $\| \cdot \|_2-$closure of the linear span of the set
\begin{equation}
E=\{ x\in \mc L^2: \al(x)=\mu x \ \text{for some} \ \mu \in \Bbb C_1\}.
\end{equation}

\begin{pro}\label{Proposition 3}
$\mc K \su bWW$.
\begin{proof}
By Theorem \ref{Theorem 4}, it is sufficient to show that $\sum_{j=1}^ma_jx_j\in bWW$ whenever $a_j\in \Bbb C$ and $x_j\in E$, $1\leq j\leq m$.
For this, one will verify that $E\su WW$. 

\noindent
If $x\in E$, then $\al(x)=\mu x$, $\mu \in \Bbb C_1$. Fix $\ep>0$ and find $p\in P(\mc M)$ with $\tau(p^\perp)\leq \ep$ such that $xp\in \mc M$. 
Then, given $\lb \in \Bbb C_1$, we have
$$
a_n(x,\lb)=xp\frac 1 n \sum_{k=0}^{n-1}(\lb \mu)^k.
$$
Therefore, since the averages $\frac 1 n \sum_{k=0}^{n-1}(\lb \mu)^k$ converge, 
we conclude that the sequence $\{ a_n(x,\lb)p\}$ converges in $\mc M$, whence $x\in WW$.
\end{proof}
\end{pro}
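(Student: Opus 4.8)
The plan is to reduce to a dense subclass and exploit the eigenvalue structure. By Theorem \ref{Theorem 4} the set $\mc L^2\cap bWW$ is $\|\cdot\|_2$-closed, so since $\mc K$ is by definition the $\|\cdot\|_2$-closure of the linear span of $E$, it suffices to prove that every finite linear combination $\sum_{j=1}^m a_jx_j$ with $a_j\in\Bbb C$ and $x_j\in E$ lies in $bWW$. In fact I expect to establish the stronger assertion $E\su WW$ and then pass to linear combinations, which is legitimate because $WW\su bWW$.

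First I would treat a single $x\in E$, so that $\al(x)=\mu x$ for some $\mu\in\Bbb C_1$. By induction on $k$, linearity of $\al$ yields $\al^k(x)=\mu^k x$, whence for any $\lb\in\Bbb C_1$ the average collapses to a scalar multiple of $x$:
\[
a_n(x,\lb)=\frac1n\sum_{k=0}^{n-1}\lb^k\al^k(x)=x\cdot\frac1n\sum_{k=0}^{n-1}(\lb\mu)^k.
\]
Since $x\in\mc L^2$ need not be bounded, I would next use $\tau$-measurability to truncate: given $\ep>0$, choose $p\in P(\mc M)$ with $\tau(p^\perp)\le\ep$ and $xp\in\mc M$. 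Multiplying on the right by $p$ and pulling the scalar factor out gives $a_n(x,\lb)p=xp\cdot\frac1n\sum_{k=0}^{n-1}(\lb\mu)^k$, with $xp\in\mc M$ fixed. The convergence now reduces to the classical scalar fact that, because $|\lb\mu|=1$, the geometric averages $\frac1n\sum_{k=0}^{n-1}(\lb\mu)^k$ converge for every $\lb\in\Bbb C_1$ (to $1$ when $\lb\mu=1$ and to $0$ otherwise). Hence $\{a_n(x,\lb)p\}$ converges in $\|\cdot\|_\ii$ to $xp$ times this limit, the one projection $p$ serving all $\lb\in\Bbb C_1$ at once, so $x\in WW$. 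To handle $\sum_{j=1}^m a_jx_j$, I would for each $j$ pick $p_j$ as above with $\tau(p_j^\perp)\le\ep/m$ and set $p=\land_{j=1}^m p_j$; then $\tau(p^\perp)\le\ep$, each $\{a_n(x_j,\lb)p\}$ still converges in $\mc M$ (as $p\le p_j$ and $a_n(x_j,\lb)p=a_n(x_j,\lb)p_j\cdot p$), and finite additivity of $a_n(\cdot,\lb)$ gives convergence of $\{a_n(\sum_j a_jx_j,\lb)p\}$. Thus the linear span of $E$ lies in $WW\su bWW$, and Theorem \ref{Theorem 4} closes the argument.

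The conceptual point, and the only place any care is needed, is that the Wiener-Wintner property demands a single projection valid for all $\lb$ simultaneously; here that uniformity is essentially free, since after truncation the entire $\lb$-dependence sits in the scalar prefactor while the operator part $xp$ is fixed. The genuine obstacle is therefore not the simultaneity over $\lb$ but merely the unboundedness of elements of $\mc L^2$, which forces the truncation step before one can speak of convergence in $\mc M$; the convergence itself is the elementary scalar computation displayed above.
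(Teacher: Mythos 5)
Your proof is correct and takes essentially the same route as the paper: reduce via Theorem \ref{Theorem 4} to the linear span of $E$, collapse $a_n(x,\lb)$ to the scalar averages $\frac 1n\sum_{k=0}^{n-1}(\lb\mu)^k$ times $x$, and truncate by a projection $p$ with $xp\in\mc M$. Your explicit handling of finite linear combinations via $p=\land_{j=1}^m p_j$ with $\tau(p_j^\perp)\leq \ep/m$, and your writing $a_n(x,\lb)p=xp\cdot\frac 1n\sum_{k=0}^{n-1}(\lb\mu)^k$ (the paper's display omits the right factor $p$), simply make precise details the paper leaves implicit.
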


\section{Spectral characterization of $\mc K^\perp$}\label{3}

The space $\mc L^2$ equipped with the inner product $(x,y)_\tau =\tau(x^*y)$ is a Hilbert space such that $\| x\|_2 =\| x\|_\tau = (x,x)_\tau^{1/2}$, 
$x\in \mc L^2$.

\vskip 5pt
From now on we shall assume that $\tau$ and $\al$ satisfy the following additional conditions: 
$\tau$ is a state, $\al$ is a homomorphism, and $\tau \circ \al =\tau$.
Notice that then $\| \al(x) \|_2=\| x\|_2$ and $| \tau(x) | \leq \| x\|_2$ for every $x\in \mc L^2$.

\begin{pro}\label{Proposition 4} 
 If $x\in \mc L^2$, then the sequence $\{\gm_x(l)\}_{-\ii}^\ii$ given by 
\begin{equation*}
\gm_x(l)=
\begin{cases}
\tau(x^*\al^l(x)), &\text{if \ $l \ge 0$} \\
\ol {\tau(x^*\al^{-l}(x))}, &\text{if \ $ l<0$.}
\end{cases}
\end{equation*}
is positive definite.
\begin{proof}
If $\mu_0, \dots , \mu_m \in \Bbb C$, then, taking into account that positivity of $\al$ implies that $\al(y)^*=\al(y^*)$, $y\in \mc L^2$,  we have
$$
0\leq \left \| \sum_{k=0}^m\mu_k\al^k(x)\right \|_2^2=\left (\sum_{j=0}^m\mu_j\al^j(x), \sum_{i=0}^m\mu_i\al^i(x) \right )_\tau 
=\sum_{i,j=0}^m\mu_i \bar\mu_j\tau(\al^j(x^*)\al^i(x)).
$$
If $i\ge j$, we can write
$$
\tau(\al^j(x^*)\al^i(x))=\tau(\al^j(x^*\al^{i-j}(x)))=\tau(x^*\al^{i-j}(x))=\gm_x(i-j),
$$
and if $i<j$, we have
$$
\tau(\al^j(x^*)\al^i(x))=\ol { \tau(\al^i(x^*)\al^j(x))}=\ol {\tau(x^*\al^{j-i}(x))}=\gm_x(i-j).
$$
Therefore
$$
\sum_{i,j=0}^m\gm_x(i-j)\mu_i\bar \mu_j\ge 0
$$
for any $\mu_0, \dots , \mu_m \in \Bbb C$, hence $\{ \gm_x(l)\}$ is positive definite.
\end{proof}
\end{pro}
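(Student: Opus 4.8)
The plan is to realize the quadratic form attached to $\{\gm_x(l)\}$ as the squared $\mc L^2$-norm of a single element, which makes nonnegativity automatic. Recall that positive definiteness of $\{\gm_x(l)\}$ means precisely that $\sum_{i,j=0}^m\gm_x(i-j)\mu_i\ol{\mu_j}\ge 0$ for every $m$ and all scalars $\mu_0,\dots,\mu_m\in\Bbb C$. So I would fix such scalars and consider the element $y=\sum_{k=0}^m\mu_k\al^k(x)$ of $\mc L^2$. Expanding $\|y\|_2^2=(y,y)_\tau$ in the inner product $(\cdot,\cdot)_\tau$ yields $\sum_{i,j=0}^m\mu_i\ol{\mu_j}\,\tau(\al^j(x)^*\al^i(x))$, and since $\|y\|_2^2\ge 0$ the whole problem reduces to identifying each coefficient $\tau(\al^j(x)^*\al^i(x))$ with $\gm_x(i-j)$.

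The key step is therefore to verify the identity $\tau(\al^j(x^*)\al^i(x))=\gm_x(i-j)$ for all $i,j\ge 0$, using that positivity of $\al$ gives $\al(y)^*=\al(y^*)$, that $\al$ is a homomorphism, and that $\tau\circ\al=\tau$. When $i\ge j$, I would factor out $\al^j$ via the homomorphism property, writing $\al^j(x^*)\al^i(x)=\al^j\big(x^*\al^{i-j}(x)\big)$, and then apply $\tau\circ\al=\tau$ to collapse this to $\tau(x^*\al^{i-j}(x))=\gm_x(i-j)$, which is legitimate since $i-j\ge 0$. When $i<j$, I would pass to the adjoint: because $\tau$ is a faithful tracial state one has $\ol{\tau(a)}=\tau(a^*)$, so $\tau(\al^j(x^*)\al^i(x))=\ol{\tau(\al^i(x^*)\al^j(x))}$, and the inner expression now falls under the already-treated case $j\ge i$, giving $\ol{\tau(x^*\al^{j-i}(x))}$, which is exactly $\gm_x(i-j)$ for a negative argument.

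Combining the two displays, the coefficients of the form are exactly the $\gm_x(i-j)$, so $\sum_{i,j=0}^m\gm_x(i-j)\mu_i\ol{\mu_j}=\|y\|_2^2\ge 0$, establishing positive definiteness. The main (and essentially the only) obstacle is the careful index bookkeeping in the case split: one must keep track of which exponent is smaller in order to factor out the correct power of $\al$, and must invoke $*$-preservation of $\al$ together with the tracial identity $\ol{\tau(a)}=\tau(a^*)$ to produce the complex conjugate demanded by the definition of $\gm_x$ at negative indices. Everything else---well-definedness of $\tau(x^*\al^{i-j}(x))$ via H\"older, and norm preservation under $\al$---is routine and is not actually needed for the inequality itself.
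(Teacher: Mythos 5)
Your proposal is correct and follows essentially the same route as the paper's own proof: realize the quadratic form as $\bigl\|\sum_{k=0}^m\mu_k\al^k(x)\bigr\|_2^2\ge 0$ and identify the coefficients $\tau(\al^j(x^*)\al^i(x))$ with $\gm_x(i-j)$ by the same case split, using the homomorphism property, $\tau\circ\al=\tau$, and $*$-preservation of $\al$. The only cosmetic difference is that the conjugation identity $\ol{\tau(a)}=\tau(a^*)$ needs only positivity of the state $\tau$, not faithfulness, as you invoke.
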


Consequently, given $x\in \mc L^2$, by Herglotz-Bochner theorem, 
there exists a positive finite Borel measure $\sg_x$ on $\Bbb C_1$ such that

\begin{equation}
\tau(x^*\al^l(x))=\gm_x(l)=\widehat {\sg_x}(l)=\int_{\Bbb C_1}e^{2\pi ilt}d\sg_x(t), \ \ l=1,2, \dots  \ .
\end{equation}

\begin{lm}\label{Lemma 1}
$\al(\mc K^\perp)\su \mc K^\perp$.

\begin{proof}
Since $\al :\mc L^2\to \mc L^2$ is an isometry, we have $\| \al \|=1$. 
Therefore $\| \al^*\|=1$ as well, so that $\| \al^*(x)\|_2\leq \| x\|_2$, $x\in \mc L^2$.

\noindent
Let $x\in E$, that is, $x\in \mc L^2$ and $\al(x)=\mu x$ for some $\mu \in \Bbb C_1$. Then we have
$$
\| \al^*(x)-\bar \mu x\|_2^2=\| \al^*(x)\|^2_2-\bar \mu (\al^*(x),x)_\tau - \mu (x,\al^*(x))_\tau +\| x \|^2_2\leq 0,
$$
and it follows that $\al^*(x)=\bar \mu x$.

Now, if $y\in \mc K^\perp$, then   $(\al(y),x)_\tau =(y,\al^*(x))_\tau=\bar \mu (y,x)_\tau =0$, which implies that $\al(y) \perp E$, hence $\al(y) \in \mc K^\perp$.
\end{proof}
\end{lm}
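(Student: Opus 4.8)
The plan is to work entirely within the Hilbert space $\mc L^2$ and to pin down how the Hilbert space adjoint $\al^*$ acts on the eigenvectors that generate $\mc K$. Under the standing hypotheses we have $\| \al(x)\|_2=\| x\|_2$ for every $x\in \mc L^2$, so $\al$ is an isometry of $\mc L^2$; in particular $\| \al\|=1$, whence $\| \al^*\|=1$ and $\al^*$ is a contraction, i.e. $\| \al^*(x)\|_2\leq \| x\|_2$ for all $x\in \mc L^2$. The inclusion $\al(\mc K^\perp)\su \mc K^\perp$ is equivalent to $\al^*$ leaving $\mc K$ invariant, and since $\mc K$ is the closed span of $E$, it suffices to control $\al^*$ on $E$.

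The key step, and the one I expect to be the main obstacle, is to show that a unimodular eigenvector of $\al$ is an eigenvector of $\al^*$ with conjugate eigenvalue: if $x\in E$ with $\al(x)=\mu x$, $\mu\in \Bbb C_1$, then $\al^*(x)=\bar\mu x$. First I would record the identity $(\al^*(x),x)_\tau=(x,\al(x))_\tau=\mu \| x\|_2^2$, together with $\| \bar\mu x\|_2=\| x\|_2$, and then expand
$$
\| \al^*(x)-\bar\mu x\|_2^2=\| \al^*(x)\|_2^2-2\| x\|_2^2+\| x\|_2^2=\| \al^*(x)\|_2^2-\| x\|_2^2\leq 0,
$$
the last inequality being exactly the contraction bound on $\al^*$. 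A squared norm cannot be negative, so it vanishes and $\al^*(x)=\bar\mu x$. The delicate point is that the two cross terms collapse to $-2\| x\|_2^2$ precisely because $| \mu|=1$, so that the isometry of $\al$ on the eigenvector balances the contraction bound on $\al^*$; without unimodularity this cancellation fails.

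Granting this, the conclusion follows by a routine orthogonality argument. Fixing $y\in \mc K^\perp$ and an arbitrary $x\in E$ with $\al(x)=\mu x$, I would write
$$
(\al(y),x)_\tau=(y,\al^*(x))_\tau=\bar\mu(y,x)_\tau=0,
$$
the last equality holding because $x\in E\su \mc K$ while $y\perp \mc K$. Hence $\al(y)\perp E$, and by linearity and continuity of the inner product $\al(y)$ is orthogonal to the closed linear span of $E$, which is $\mc K$. Therefore $\al(y)\in \mc K^\perp$, and since $y\in \mc K^\perp$ was arbitrary we obtain $\al(\mc K^\perp)\su \mc K^\perp$.
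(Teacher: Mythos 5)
Your proof is correct and takes essentially the same route as the paper's: both use the isometry of $\al$ on $\mc L^2$ to get $\| \al^*\| \leq 1$, then deduce $\al^*(x)=\bar\mu x$ for $x\in E$ from the inequality $\| \al^*(x)-\bar\mu x\|_2^2\leq 0$, and finish with the orthogonality computation $(\al(y),x)_\tau=(y,\al^*(x))_\tau=\bar\mu(y,x)_\tau=0$. Your only additions are to spell out the cross-term cancellation (where unimodularity of $\mu$ enters), which the paper leaves implicit, and to note explicitly that orthogonality to $E$ passes to its closed span.
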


\begin{pro}\label{Proposition 5}
If $x\in \mc K^\perp$, then the measure $\sg_x$ is continuous.
\begin{proof}
We need to show that $\sg_x(\{t\})=0$ for every $t\in \Bbb C_1$. It is known (\cite{ka}, p.42) that
$$
\sg_x(\{t\})=\lim_{n\to \ii}\frac 1n \sum_{l=1}^ne^{2\pi ilt} \widehat {\sg_x}(t),
$$
which is equal to
$$
\lim_{n\to \ii} \frac 1n \sum_{l=1}^n e^{2\pi ilt}\tau(x^*\al^l(x))=
\lim_{n\to \ii}\tau \left ( x^* \left ( \frac 1n \sum_{l=1}^n e^{2\pi ilt} \al^l(x) \right ) \right ).
$$
Therefore it is sufficient to verify that

\begin{equation}
\lim_{n\to \ii} \left \|  \frac 1n \sum_{l=1}^n e^{2\pi ilt} \al^l(x) \right \|_2 =0.
\end{equation}
By the Mean Ergodic theorem applied to $\widetilde \al :\mc L^2\to \mc L^2$ given by $\widetilde \al (x)= e^{2\pi it}\al(x)$, 
we conclude that
$$
\frac 1n \sum_{l=1}^n e^{2\pi ilt} \al^l(x) \to \bar x \ \ \text{in} \ \ \mc L^2.
$$
Since $x\in \mc K^\perp$, by Lemma \ref{Lemma 1}, we have $\al^l(x)\in \mc K^\perp$ for each $l$, which implies that $\bar x\in \mc K^\perp$.
Besides,
$$
\al(\bar x)=\| \cdot \|_2-\lim_{n\to \ii} \frac 1n \sum_{l=1}^n e^{2\pi ilt} \al^{l+1}(x)=e^{-2\pi it}\bar x,
$$
so that $\bar x\in \mc K$. Therefore $\bar x=0$, and (5) follows.
\end{proof}
\end{pro}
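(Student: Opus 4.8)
The plan is to describe the atoms of $\sg_x$ through the Cesàro means of its Fourier coefficients and then to annihilate those means by applying the mean ergodic theorem to a twisted version of $\al$. Fix $t\in \Bbb C_1$; the goal is $\sg_x(\{t\})=0$. First I would invoke the classical formula expressing a point mass as a limit of averaged Fourier coefficients, $\sg_x(\{t\})=\lim_n \frac 1n\sum_{l=1}^n e^{2\pi ilt}\,\widehat{\sg_x}(l)$, which follows from Fubini together with the fact that $\frac 1n\sum_{l=1}^n e^{2\pi il(s-t)}$ is bounded by $1$ and tends pointwise to the indicator of $\{s=t\}$, so that dominated convergence applies. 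Substituting $\widehat{\sg_x}(l)=\gm_x(l)=\tau(x^*\al^l(x))$ and pulling the finite sum inside $\tau$ gives $\sg_x(\{t\})=\lim_n \tau\!\left(x^*\,\frac 1n\sum_{l=1}^n e^{2\pi ilt}\al^l(x)\right)$. Since $|\tau(y)|\le \|y\|_2$ for $y\in \mc L^2$, it then suffices to prove that $\frac 1n\sum_{l=1}^n e^{2\pi ilt}\al^l(x)\to 0$ in $\mc L^2$.

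This $\mc L^2$-convergence is the heart of the matter, and I would obtain it from the mean ergodic theorem. Set $\wt\al(y)=e^{2\pi it}\al(y)$ on the Hilbert space $(\mc L^2,(\cdot,\cdot)_\tau)$. Under the standing assumptions ($\tau$ a state, $\al$ a homomorphism, $\tau\circ\al=\tau$) the map $\al$ is an $\mc L^2$-isometry, hence so is $\wt\al$; in particular $\wt\al$ is a contraction, and the mean ergodic theorem yields that $\frac 1n\sum_{l=1}^n \wt\al^l(x)=\frac 1n\sum_{l=1}^n e^{2\pi ilt}\al^l(x)$ converges in $\mc L^2$ to the orthogonal projection $\ol x$ of $x$ onto the fixed-point subspace $\{y:\wt\al(y)=y\}$. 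The identity $\wt\al(\ol x)=\ol x$ reads $\al(\ol x)=e^{-2\pi it}\ol x$, so $\ol x$ is an eigenvector of $\al$ with unimodular eigenvalue and therefore $\ol x\in E\su \mc K$.

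It remains to see that $\ol x$ is also orthogonal to $\mc K$, which forces $\ol x=0$. Since $x\in \mc K^\perp$ and $\al(\mc K^\perp)\su \mc K^\perp$ by Lemma \ref{Lemma 1}, every iterate $\al^l(x)$ lies in the closed subspace $\mc K^\perp$, hence so does each average and, by closedness, the limit $\ol x$. Thus $\ol x\in \mc K\cap \mc K^\perp=\{0\}$, the twisted averages converge to $0$ in $\mc L^2$, and consequently $\sg_x(\{t\})=0$; as $t$ was arbitrary, $\sg_x$ is continuous. I expect the one genuinely load-bearing point to be the identification of the mean-ergodic limit as a bona fide $\al$-eigenvector sitting in $\mc K$: this rests entirely on $\al$ (and hence $\wt\al$) being an isometry of $\mc L^2$, exactly what the additional hypotheses on $\tau$ and $\al$ provide, with everything else reducing to routine use of Lemma \ref{Lemma 1} and the estimate $|\tau(y)|\le\|y\|_2$.
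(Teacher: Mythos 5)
Your proposal is correct and follows essentially the same route as the paper: the Katznelson formula for the point mass, the reduction via $|\tau(y)|\le \|y\|_2$ to showing the twisted averages vanish in $\mc L^2$, the mean ergodic theorem applied to the isometry $\wt\al(y)=e^{2\pi it}\al(y)$, and the identification of the limit as an element of $\mc K\cap \mc K^\perp=\{0\}$ using Lemma \ref{Lemma 1}. The only cosmetic difference is that you read off the eigenvector relation $\al(\ol x)=e^{-2\pi it}\ol x$ from the mean-ergodic identification of the limit as the projection onto the fixed space of $\wt\al$, whereas the paper verifies it by a direct one-line computation with the shifted averages; both are sound.
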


\section{Non-commutative Van der Corput's inequality}\label{4}
It was shown in \cite{nsz} that the extremely useful Van der Corput's 
"Fundamental Inequality" (see \cite{as}) can be fully extended to any $*-$algebra:

\begin{teo}\label{Theorem 5} \cite{nsz} If $n\ge 1$, $0\leq m\leq n-1$ are integers and $a_0, \dots , a_{n-1}$ are elements of a $*-$algebra, then
$$
\left (\sum_{k=0}^{n-1}a_k^*\right )\left (\sum_{k=0}^{n-1}a_k\right )\leq \frac {n-1+m}{m+1}\sum_{k=0}^{n-1}a_k^*a_k+
\frac {2(n-1+m)}{m+1}\sum_{l=1}^m\frac {m-l+1}{m+1}Re\sum_{k=0}^{n-1}a_k^*a_{k+l}.
$$
\end{teo}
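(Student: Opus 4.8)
The plan is to reduce the inequality to a single application of the Cauchy--Schwarz inequality in the ambient $*$-algebra, after replacing the plain sum $\sum_{k=0}^{n-1}a_k$ by an averaged family of overlapping ``block sums'' of length $m+1$. First I would extend the data by setting $a_k=0$ for $k<0$ and for $k\ge n$, and introduce, for each integer $\nu$, the window
$$
S_\nu=\sum_{j=0}^{m}a_{\nu+j}.
$$
A direct count shows that every $a_k$ with $0\le k\le n-1$ lies in exactly $m+1$ of these windows, so that
$$
(m+1)\sum_{k=0}^{n-1}a_k=\sum_{\nu}S_\nu ,
$$
the sum running over the indices $\nu$ for which $S_\nu\neq 0$. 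I would record here that the two extremal windows degenerate to the singletons $S=a_0$ and $S=a_{n-1}$; this degeneracy is what ultimately controls the exact leading constant.

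The second step is the Cauchy--Schwarz inequality written in its ``defect'' form, which is valid in any $*$-algebra because the defect is a sum of elements of the shape $b^*b$:
$$
\left(\sum_\nu S_\nu\right)^*\left(\sum_\nu S_\nu\right)=M\sum_\nu S_\nu^*S_\nu-\sum_{\nu<\nu'}(S_\nu-S_{\nu'})^*(S_\nu-S_{\nu'}),
$$
where $M$ is the number of nonzero windows. Discarding the non-negative defect yields the bound $\bigl(\sum_\nu S_\nu\bigr)^*\bigl(\sum_\nu S_\nu\bigr)\le M\sum_\nu S_\nu^*S_\nu$, and after dividing by $(m+1)^2$ the left-hand side becomes exactly $\bigl(\sum_k a_k^*\bigr)\bigl(\sum_k a_k\bigr)$.

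Third, I would expand $\sum_\nu S_\nu^*S_\nu=\sum_\nu\sum_{i,j=0}^{m}a_{\nu+i}^*a_{\nu+j}$ and regroup according to the lag $l=j-i$. The diagonal $l=0$ contributes $(m+1)\sum_k a_k^*a_k$, while for each $l$ with $1\le l\le m$ there are precisely $m+1-l$ pairs $(i,j)$ at that lag, and the contributions of $+l$ and $-l$ combine into $2(m+1-l)\,\mathrm{Re}\sum_k a_k^*a_{k+l}$. This is the Fej\'er-kernel structure underlying the whole estimate; after division by $(m+1)^2$ these coefficients are exactly the $\dfrac{m-l+1}{m+1}$ appearing in the statement, so the covariance part of the inequality falls out automatically.

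The main obstacle is the leading constant, and this is the step I expect to require genuine care. The crude reading of the covering identity gives $M=n+m$ nonzero windows, which only produces the factor $\dfrac{n+m}{m+1}$. To sharpen it to the stated $\dfrac{n-1+m}{m+1}$ I would exploit the degeneracy of the two extremal windows noted in the first paragraph: since $a_0$ and $a_{n-1}$ occur both as singleton blocks and inside full-length interior blocks, the associated differences $(S_\nu-S_{\nu'})^*(S_\nu-S_{\nu'})$ retained in the defect term can be used to reclaim one full copy of $\sum_\nu S_\nu^*S_\nu$, effectively lowering the admissible count from $n+m$ to $n-1+m$. Isolating from the defect a non-negative quantity that dominates exactly one such copy—so that the constant can be reduced by precisely $1/(m+1)$ without disturbing the Fej\'er coefficients—is the delicate heart of the argument, and I would treat the boundary bookkeeping here with particular attention.
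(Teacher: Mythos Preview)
The paper does not supply its own proof of this theorem; it is quoted verbatim from \cite{nsz}. So there is no ``paper's approach'' to compare with. That said, your windowed--sum argument (set $a_k=0$ outside $[0,n-1]$, form $S_\nu=\sum_{j=0}^m a_{\nu+j}$, apply the $*$-algebraic Cauchy--Schwarz identity, and expand $\sum_\nu S_\nu^*S_\nu$ by lag) is exactly the classical proof, and it is carried out correctly. It delivers the inequality with leading factor $\dfrac{n+m}{m+1}$, since there are precisely $M=n+m$ nonzero windows.

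The ``main obstacle'' you identify --- sharpening $n+m$ to $n-1+m$ --- is not an obstacle at all: the constant $n-1+m$ printed in the paper is a typo. The inequality with that constant is \emph{false}. Take scalars with $m=0$ and $a_0=\cdots=a_{n-1}=1$: the left side is $n^2$ while the right side is $(n-1)\cdot n$. More generally, with $n=3$, $m=1$, $a_0=a_1=a_2=1$ one gets $9$ on the left and $\tfrac{3}{2}\cdot 3+3\cdot\tfrac12\cdot 2=7.5$ on the right. The correct constant, both in the classical Van der Corput fundamental inequality and in \cite{nsz}, is $n+m$ (equivalently $N+H-1$ with $H=m+1$, $N=n$), and your argument already produces it. Note that the only consequence used later in the paper is inequality~(6), which follows from $\dfrac{n+m}{n}<2$ just as well as from $\dfrac{n-1+m}{n}<2$, so nothing downstream is affected.

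Consequently, your proposed repair --- extracting from the defect $\sum_{\nu<\nu'}(S_\nu-S_{\nu'})^*(S_\nu-S_{\nu'})$ an entire extra copy of $\sum_\nu S_\nu^*S_\nu$ --- cannot succeed, because it would prove a false statement. Indeed, ``reclaiming one copy'' amounts to asserting $(M-1)\sum_\nu S_\nu^*S_\nu\ge\bigl(\sum_\nu S_\nu\bigr)^*\bigl(\sum_\nu S_\nu\bigr)$, and the scalar examples above already violate this. Drop that paragraph; the rest of your proposal is a complete and correct proof of the (correctly stated) inequality.
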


\begin{cor}\label{Corollary 1} If in Theorem \ref{Theorem 5}, $a_0, \dots , a_{n-1}$ are elements of a $C^*-$algebra with the norm $\| \cdot \|$, then
$$
\left \| \sum_{k=0}^{n-1}a_k \right \|^2 \leq \frac {n-1+m}{m+1} \left \| \sum_{k=0}^{n-1}a_k^*a_k \right \|+
\frac {2(n-1+m)}{m+1}\sum_{l=1}^m\frac {m-l+1}{m+1}\left \| \sum_{k=0}^{n-1}a_k^*a_{k+l}\right \|,
$$
which implies that
$$
\left \| \frac 1n \sum_{k=0}^{n-1}a_k \right \|^2 \leq \frac {n-1+m}{(m+1)n} \left \| \frac 1n \sum_{k=0}^{n-1}a_k^*a_k \right \|+
\frac {2(n-1+m)}{(m+1)n}\sum_{l=1}^m\frac {m-l+1}{m+1}\left \| \frac 1n \sum_{k=0}^{n-1}a_k^*a_{k+l}\right \|,
$$
and further
\begin{equation}
\left \| \frac 1n \sum_{k=0}^{n-1}a_k \right \|^2 < \frac 2{m+1} \left \| \frac 1n \sum_{k=0}^{n-1}a_k^*a_k \right \|+
\frac 4{m+1}\sum_{l=1}^m\left \| \frac 1n \sum_{k=0}^{n-1}a_k^*a_{k+l}\right \|.
\end{equation}
\end{cor}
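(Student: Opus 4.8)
The plan is to convert the operator inequality of Theorem~\ref{Theorem 5} into a norm inequality by exploiting order--norm monotonicity in a $C^*$-algebra. First I would set $b=\sum_{k=0}^{n-1}a_k$ and observe that the left-hand side of Theorem~\ref{Theorem 5} is precisely $b^*b$; by the $C^*$-identity its norm is $\|b^*b\|=\|b\|^2=\left\|\sum_{k=0}^{n-1}a_k\right\|^2$, exactly the quantity on the left of the corollary. Writing $B$ for the right-hand side of Theorem~\ref{Theorem 5}, I note that $B$ is self-adjoint (the term $\sum_{k=0}^{n-1}a_k^*a_k$ is positive, each $Re\sum_{k=0}^{n-1}a_k^*a_{k+l}$ is self-adjoint, and the coefficients are real), and Theorem~\ref{Theorem 5} reads $0\le b^*b\le B$, the positivity being automatic since $b^*b\ge 0$.

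The key step is the implication that $0\le A\le B$ with $A,B$ self-adjoint forces $\|A\|\le\|B\|$: indeed $B\le\|B\|\,\Bbb I$ gives $0\le b^*b\le\|B\|\,\Bbb I$, and since $b^*b\ge 0$ this places its spectrum in $[0,\|B\|]$, whence $\left\|\sum_{k=0}^{n-1}a_k\right\|^2=\|b^*b\|\le\|B\|$. It then remains to estimate $\|B\|$ termwise. Using the triangle inequality I would pull the nonnegative scalar coefficients $\frac{n-1+m}{m+1}$, $\frac{2(n-1+m)}{m+1}$ and $\frac{m-l+1}{m+1}$ out of the norms, and on each cross term apply $\left\|Re\sum_{k=0}^{n-1}a_k^*a_{k+l}\right\|=\tfrac12\left\|c+c^*\right\|\le\|c\|$ with $c=\sum_{k=0}^{n-1}a_k^*a_{k+l}$. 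This produces the first displayed inequality of the corollary.

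For the remaining two inequalities no new idea is needed. Dividing the first inequality by $n^2$ and using $\|b\|^2/n^2=\left\|\tfrac1n b\right\|^2$ together with the rescaling $\tfrac1{n^2}\sum=\tfrac1n\cdot\tfrac1n\sum$ of each summand on the right yields the second inequality verbatim. For the final bound~(6) the only work is crude coefficient control: since $0\le m\le n-1$ one has $n-1+m<2n$, hence $\tfrac{n-1+m}{(m+1)n}<\tfrac{2}{m+1}$ and $\tfrac{2(n-1+m)}{(m+1)n}<\tfrac{4}{m+1}$, while $\tfrac{m-l+1}{m+1}\le 1$ for $1\le l\le m$ lets me discard those weights; substituting these into the second inequality delivers~(6). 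I do not anticipate a genuine obstacle, since all the content resides in Theorem~\ref{Theorem 5}; the one point deserving care is the order-to-norm passage, which truly relies on the positivity $b^*b\ge 0$ rather than merely on $b^*b\le B$, together with the elementary fact that $\|B\|$ can be controlled term by term via $\|Re(c)\|\le\|c\|$.
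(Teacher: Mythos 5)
Your proposal is correct and is precisely the argument the paper leaves implicit (the corollary is stated without proof as an immediate consequence of Theorem~\ref{Theorem 5}): pass from the operator inequality to norms via $0\le b^*b\le B\Rightarrow\|b^*b\|\le\|B\|$ and the $C^*$-identity, estimate $\|B\|$ termwise using $\|Re(c)\|\le\|c\|$, rescale by $n^2$, and bound the coefficients using $m\le n-1$, so that $n-1+m<2n$ and $\frac{m-l+1}{m+1}\le 1$. All steps check out, including the point you correctly flag that the order-to-norm passage uses the positivity of $b^*b$ and not merely the upper bound.
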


\section{Proof of the main result}\label 5

We will assume now that $\al$ is ergodic on $\mc L^2$, that is, $\al(x)=x$, $x\in \mc L^2$, implies that $x=c\cdot \Bbb I$, $c\in \Bbb C$.

\begin{pro}\label{Proposition 6}
If $x\in \mc L^2$, then $a_n(x)\to \tau(x)\cdot \Bbb I$ a.u.
\end{pro}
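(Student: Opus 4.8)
The plan is to combine the mean ergodic theorem on $\mc L^2$ with a maximal ergodic inequality, in Banach-principle fashion, after first splitting off the constant. Since $\al$ is a trace-preserving homomorphism and $\tau$ is faithful, $\al(\Bbb I)$ is a projection with $\tau(\al(\Bbb I))=\tau(\Bbb I)=1$, whence $\al(\Bbb I)=\Bbb I$ and $a_n(\tau(x)\Bbb I)=\tau(x)\Bbb I$ for every $n$. Replacing $x$ by $x-\tau(x)\Bbb I$, it suffices to show that $a_n(x)\to 0$ a.u. whenever $x\in\mc L^2$ with $\tau(x)=0$. Because $\al$ is an isometry of the Hilbert space $\mc L^2$, the Mean Ergodic theorem gives $a_n(x)\to Px$ in $\|\cdot\|_2$, where $P$ is the orthogonal projection onto $\mathrm{Fix}(\al)$; ergodicity forces $\mathrm{Fix}(\al)=\Bbb C\,\Bbb I$, and since $\tau(y)=(\Bbb I,y)_\tau$ and $\tau(x)=0$ we get $Px=0$. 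Consequently $\{x\in\mc L^2:\tau(x)=0\}=(\Bbb C\,\Bbb I)^\perp=\mathrm{Fix}(\al)^\perp=\ol{\mathrm{Ran}(I-\al)}$, so the coboundaries are $\|\cdot\|_2$-dense in the subspace we must handle.

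Next I would verify a.u. convergence on the dense set of coboundaries and then pass to the closure. For $y\in\mc M$ the average telescopes, $a_n((I-\al)y)=\tfrac1n(y-\al^n(y))$, and since $\al$ does not increase the uniform norm, $\|a_n((I-\al)y)\|_\ii\leq 2\|y\|_\ii/n\to 0$; thus on coboundaries the convergence is even in operator norm, a fortiori a.u. As $\mc M$ is $\|\cdot\|_2$-dense in $\mc L^2$ and $I-\al$ is bounded on $\mc L^2$, the set $(I-\al)(\mc M)$ is dense in $\ol{\mathrm{Ran}(I-\al)}$. The remaining step is the Banach-principle passage: given $x$ with $\tau(x)=0$ and $\ep,\dt>0$, choose a coboundary $w$ with $\|x-w\|_2$ small, use the maximal inequality to find $e\in P(\mc M)$ with $\tau(e^\perp)\leq\ep$ and $\sup_n\|a_n(x-w)e\|_\ii\leq\dt$, and combine this with the operator-norm convergence of $a_n(w)$ to see that $\{a_n(x)e\}$ is uniformly Cauchy; a routine refinement over $\ep,\dt\to 0$ then produces the a.u. limit.

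The main obstacle is obtaining the maximal inequality in exactly the form needed here. Proposition \ref{Proposition 0} supplies only the \emph{bilateral} bound $\sup_n\|e\,a_n(z)\,e\|_\ii\leq\dt$, which yields b.a.u. rather than a.u. convergence; to reach a.u. convergence one needs the one-sided estimate $\sup_n\|a_n(z)e\|_\ii\leq\dt$ for $z\in\mc L^2$ of small $\|\cdot\|_2$-norm, i.e. a genuine uniform equicontinuity in measure of $\{a_n\}$ on $\mc L^2$. This is precisely where the $\mc L^2$ setting (as opposed to $\mc L^1$) is essential, since the strong maximal ergodic inequality available on $\mc L^2$ delivers the required one-sided control. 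I would therefore invoke this $\mc L^2$ maximal inequality to produce the projection $e$ in the step above, after which the telescoping and Banach-principle argument close the proof.
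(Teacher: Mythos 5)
Your proposal is correct, but it takes a genuinely different route from the paper. The paper's proof is essentially two lines: it cites the known non-commutative individual ergodic theorem (\cite{jx}, \cite{li}) to get $a_n(x)\to \hat x$ a.u.\ for some $\hat x\in \mc L^2$, and then does only the work of identifying the limit --- exactly as in your first paragraph: the Mean Ergodic theorem and ergodicity give $a_n(x)\to c(x)\cdot \Bbb I$ in $\| \cdot \|_2$, trace invariance gives $c(x)=\tau(x)$, and uniqueness of limits in the (Hausdorff) measure topology forces $\hat x=\tau(x)\cdot \Bbb I$. You instead rebuild the a.u.\ convergence itself: the von Neumann splitting $\mc L^2=\Bbb C\,\Bbb I\oplus \ol{\mathrm{Ran}(I-\al)}$, telescoping on coboundaries $(I-\al)(\mc M)$ (where convergence is even in $\| \cdot \|_\ii$), and a Banach-principle passage to the closure. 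Your sketch is sound throughout, and you correctly identify the one non-trivial ingredient: Proposition \ref{Proposition 0} is only bilateral, while a.u.\ convergence needs the one-sided bound $\sup_n\| a_n(z)e\|_\ii\leq \dt$ for $\| z\|_2$ small. You propose to import this from the $\mc L^2$ maximal ergodic theory, which is legitimate (it is in \cite{jx}), but it is worth noting that in the present setting you can get it from the paper's own Proposition \ref{Proposition 0}: since $\al$ is a homomorphism, convexity of $y\mapsto y^*y$ gives $| a_n(z)|^2\leq \frac 1n\sum_{k=0}^{n-1}|\al^k(z)|^2=a_n(|z|^2)$, hence $\| a_n(z)e\|_\ii^2=\| e\,| a_n(z)|^2 e\|_\ii\leq \| e\,a_n(|z|^2)e\|_\ii$, and applying the bilateral estimate on $\mc L^1$ to $|z|^2$ (with $\| |z|^2\|_1=\| z\|_2^2$) yields precisely the one-sided $\mc L^2$ control. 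With that observation your argument closes and is more self-contained than the paper's (it does not lean on the cited a.u.\ theorem), at the cost of length; the paper's version buys brevity by outsourcing the convergence and keeping only the limit identification. Two details to make explicit in a full write-up: the a.u.-Cauchy limit must still be matched with $0$ on the mean-zero part (a.u.\ Cauchy gives a limit in $\mc L$ by completeness, which agrees with the $\| \cdot \|_2$-limit since both are limits in measure --- the same step the paper performs), and your ``routine refinement'' is the standard diagonal choice $e=\land_m e_m$ over a sequence of approximating coboundaries $w_m$ with $\tau(e_m^\perp)\leq \ep\,2^{-m}$.
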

\begin{proof}
By the Mean Ergodic theorem, $a_n(x) \to \bar x$ in $\mc L^2$.
Therefore $\al(a_n(x))\to \al (\bar x)$ in $\mc L^2$, so $\al(\bar x)=\bar x$, 
and the ergodicity of $\al$ implies that $\bar x=c(x)\cdot \Bbb I$.
Then, since $\tau $ is also continuous in $\mc L^2$, we have $\tau(a_n(x))\to \tau (\bar x)=c(x)$, 
hence $c(x)=\tau(x)$ because $\tau(a_n(x))=\tau(x)$ for each $n$.
It is known (\cite{jx}, \cite{li}) that $a_n(x)\to \hat x\in \mc L^2$ a.u., which implies that $a_n(x)\to \hat x$ in measure.
Since $\| \cdot \|_2-$convergence entails convergence in measure, 
we conclude that $\hat x =\bar x=\tau(x) \cdot \Bbb I$.
\end{proof}

\begin{lm}\label{Lemma 2}
If $a,b\in \mc L$ and $e\in P(\mc M)$ are such that $ae,be\in \mc M$, then 
$$
(ae)^*be=ea^*be.
$$
\begin{proof}
We have
$$
((ae)^*be)^*=(be)^*ae\su (be)^*(ea^*)^*\su(ea^*be)^*,
$$
which, since  $((ae)^*be)^*\in B(H)$, implies that $((ae)^*be)^*=(ea^*be)^*$, hence the required equality.
\end{proof}
\end{lm}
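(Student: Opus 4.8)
The plan is to reduce the identity to the single relation $(ae)^*=ea^*$, understood in the $*$-algebra $\mc L$, and then to right-multiply by $be$. The point to keep in mind throughout is that $a$ and $b$ are only closed and densely defined, whereas $ae$ and $be$ lie in $\mc M$ and are therefore bounded and everywhere defined. Consequently all products and equalities are to be read in $\mc L$, where the multiplication is the strong (closed) product, and where I may use freely that a bounded everywhere-defined operator admits no proper extension: if $T\su S$ and $T$ is everywhere defined, then $T=S$.

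First I would record the two adjoint rules I intend to use: for $c\in B(H)$ and a densely defined $x$ one has $(cx)^*=x^*c^*$, while for densely defined $x,y$ with $xy$ densely defined one has only the inclusion $y^*x^*\su(xy)^*$. Applying the first rule with $c=e$ and $x=a^*$, and using that $a$ is closed so that $(a^*)^*=a$, gives $(ea^*)^*=(a^*)^*e^*=ae$. Since the resulting operator $ae$ is everywhere defined, $ea^*$ is closable, and taking adjoints once more yields $(ae)^*=(ea^*)^{**}=\ol{ea^*}$, which is exactly the element $ea^*$ of $\mc L$. Thus $(ae)^*=ea^*$ in $\mc L$.

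It then remains to multiply on the right by $be\in\mc M$ and to invoke associativity of the product in $\mc L$, giving $(ae)^*be=(ea^*)be=ea^*be$. If one prefers to stay with honest operators and postpone the passage to $\mc L$ until the last step, the same conclusion follows from the chain
$$
((ae)^*be)^*=(be)^*ae\su(be)^*(ea^*)^*\su(ea^*be)^*,
$$
whose left-hand side is a product of bounded operators, hence bounded and everywhere defined, and therefore cannot be a proper restriction of $(ea^*be)^*$; equality of the two adjoints followed by one further adjoint returns $(ae)^*be=\ol{ea^*be}=ea^*be$.

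The only real difficulty is the bookkeeping with domains. One must be careful that $(ae)^*$ is genuinely the everywhere-defined bounded operator extending $ea^*$ rather than $ea^*$ as a raw operator, and that the asserted equality is an equality in $\mc L$, where the strong product already incorporates the necessary closure. The everywhere-definedness of the bounded operator $((ae)^*be)^*$ is precisely what upgrades the a priori inclusion in the displayed chain into the desired identity; everything else is the routine calculus of adjoints of products.
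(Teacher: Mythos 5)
Your proposal is correct and follows essentially the same route as the paper: the displayed adjoint chain $((ae)^*be)^*=(be)^*ae\su(be)^*(ea^*)^*\su(ea^*be)^*$, upgraded to equality because a bounded everywhere-defined operator admits no proper extension, is exactly the paper's argument. Your preliminary reformulation via $(ae)^*=\ol{ea^*}=ea^*$ in $\mc L$ followed by associativity of the strong product is only a mild repackaging that leans on Theorem \ref{Theorem 1} rather than a genuinely different proof.
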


Now we can prove our main result, a non-commutative Wiener-Wintner theorem.
\begin{teo}\label{Theorem 6}
Let $\mc M$ be a von Neumann algebra, $\tau$ a faithful normal tracial state on $\mc M$. 
Let $\al : \mc L^1\to \mc L^1$ be a positive ergodic homomorphism such that $\tau \circ \al$=$\tau$
and $\| \al(x)\|_\ii \leq \| x\|_\ii$, $x\in \mc M$. Then $\mc L^1 = bWW$, that is,
for every $x\in \mc L^1$ and $\ep>0$ there exists such a projection $p\in P(\mc M)$ that
$$
\tau(p^\perp)\leq \ep \ \ \text{and} \ \ \{ pa_n(x,\lb)p\} \ \text{converges in} \ \mc M  \ \text{for all} \ \lb \in \Bbb C_1.
$$
\begin{proof} Since $\mc L^2$ is dense in $\mc L^1$, $\mc L^2=\mc K \oplus \mc K^\perp$, 
and $\mc K\su bWW$ (Proposition \ref{Proposition 3}), by 
Theorem \ref{Theorem 4}, it remains to show that $\mc K^\perp \su bWW$. (In fact, we will show that  $\mc K^\perp \su WW$.)

So, let $x\in \mc K^\perp$ and fix $\ep>0$. Since $\{ x^*\al^l(x) \}_{l=0}^\ii \su \mc L^2$, due to Proposition \ref{Proposition 6}, 
one can construct a projection $p\in P(\mc M)$ in such a way that
$$
\tau (p^\perp)\leq \ep, \ \ \{ \al^k(x)p\} \su \mc M \ \ \text{for all} \ k,
$$
$$
pa_n(x^*x)p \to \tau(x^*x) p = \| x \|_2 p \ \ \text{in}  \ \mc M, \ \text{and}
$$
$$
pa_n(x^*\al^l(x))p \to \tau(x^*\al^l(x))p = \widehat {\sg_x}(l)p \ \ \text{in} \ \mc M \ \text{for every} \ l.
$$
Now, if $a_k=\lb^k\al^k(x)p$, $k=0,1,2, \dots$, then, employing Lemma \ref{Lemma 2}, we obtain
$$
a_k^*a_{k+l}=\lb^lp\al^k(x^*\al^l(x))p, \ \ k,l=0,1,2, \dots \ .
$$
At this moment we apply inequality (6) to the sequence $\{ a_k\}\su \mc M$ yielding in view of (1) and (2) that
$$
\sup_{\lb \in \Bbb C_1}\left \| a_n(x,\lb)p \right \|_\ii^2\leq \frac 2{m+1}\left \| pa_n(x^*x)p\right \|_\ii+
\frac 4{m+1}\sum_{l=1}^m\left \|pa_n(x^*\al^l(x))p\right \|_\ii.
$$
Therefore, for a fixed $m$, we have
$$
\limsup_n \sup_{\lb \in \Bbb C_1}\left \| a_n(x,\lb)p \right \|_\ii^2
\leq \frac 2{m+1}\| x\|_2^2+\frac 4{m+1}\sum_{l=1}^m\left | \widehat {\sg_x}(l) \right |.
$$
Since the mesure $\sg_x$ is continuous by Proposition \ref{Proposition 5}, 
Wiener's criterion of continuity of positive finite Borel measure (\cite{ka}, p.42) yields
$$
\lim_{m\to \ii} \frac 1{m+1}\sum_{l=1}^m \left | \widehat {\sg_x}(l) \right |^2=0,
$$
which entails
$$
\lim_{m\to \ii} \frac 1{m+1}\sum_{l=1}^m\left | \widehat {\sg_x}(l) \right | =0.
$$
Thus, we conclude that
\begin{equation}
\lim_{n\to \ii} \sup_{\lb \in \Bbb C_1}\left \| a_n(x,\lb)p \right \|_\ii =0,
\end{equation}
whence $x\in WW$.

\end{proof}
\end{teo}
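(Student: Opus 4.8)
The plan is to reduce the problem from $\mc L^1$ to the Hilbert space $\mc L^2$ and then to exploit the orthogonal splitting $\mc L^2=\mc K\oplus\mc K^\perp$. Since $\mc L^2$ is $\|\cdot\|_1$-dense in $\mc L^1$ and, by Theorem \ref{Theorem 4}, the set $bWW\cap\mc L^1$ is closed in $\mc L^1$, it suffices to prove $\mc L^2\su bWW$. Because $bWW$ is a linear subspace (the averages $a_n(\cdot,\lb)$ are additive, and one may intersect the controlling projections) and $\mc K\su bWW$ by Proposition \ref{Proposition 3}, the decomposition reduces matters to showing $\mc K^\perp\su bWW$; in fact I aim for the stronger conclusion $\mc K^\perp\su WW$.

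For the core estimate, fix $x\in\mc K^\perp$ and $\ep>0$. First I would manufacture a single projection $p$ controlling the whole countable family of correlation averages at once. Each operator $x^*\al^l(x)$ lies in $\mc L^2$, so by Proposition \ref{Proposition 6} (which uses ergodicity of $\al$) the averages $a_n(x^*\al^l(x))$ converge a.u. to $\tau(x^*\al^l(x))\cdot\Bbb I=\widehat{\sg_x}(l)\cdot\Bbb I$. Intersecting the corresponding projections with summable tolerances, and throwing in a projection that places every $\al^k(x)p$ in $\mc M$, I obtain one $p\in P(\mc M)$ with $\tau(p^\perp)\le\ep$ for which $\{\al^k(x)p\}\su\mc M$, $p\,a_n(x^*x)\,p\to\|x\|_2^2\,p$, and $p\,a_n(x^*\al^l(x))\,p\to\widehat{\sg_x}(l)\,p$ in the norm of $\mc M$, for every $l$.

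The decisive step is the Van der Corput estimate. Setting $a_k=\lb^k\al^k(x)p$, Lemma \ref{Lemma 2} together with the homomorphism identity $\al^k(x^*)\al^{k+l}(x)=\al^k(x^*\al^l(x))$ gives $a_k^*a_{k+l}=\lb^l\,p\,\al^k(x^*\al^l(x))\,p$, so that the Cesàro averages of $a_k^*a_{k+l}$ are exactly $\lb^l\,p\,a_n(x^*\al^l(x))\,p$. Feeding the $a_k$ into inequality (6) and using $|\lb|=1$ to discard the factors $\lb^l$, I get a bound on $\sup_{\lb\in\Bbb C_1}\|a_n(x,\lb)p\|_\ii^2$ whose right-hand side no longer depends on $\lb$. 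Taking $\limsup_n$ and invoking the norm convergences just secured turns this into $\frac{2}{m+1}\|x\|_2^2+\frac{4}{m+1}\sum_{l=1}^m|\widehat{\sg_x}(l)|$. Finally, since $x\in\mc K^\perp$ forces $\sg_x$ to be continuous (Proposition \ref{Proposition 5}), Wiener's criterion gives $\frac{1}{m+1}\sum_{l=1}^m|\widehat{\sg_x}(l)|^2\to0$, and Cauchy--Schwarz upgrades this to the vanishing of $\frac{1}{m+1}\sum_{l=1}^m|\widehat{\sg_x}(l)|$; letting $m\to\ii$ then yields $\sup_\lb\|a_n(x,\lb)p\|_\ii\to0$, whence $x\in WW$.

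The main obstacle, and the whole reason the Wiener--Wintner phenomenon is delicate, is the demand that a \emph{single} projection $p$ serve uniformly for the continuum of parameters $\lb\in\Bbb C_1$. The idea that resolves it is that the Van der Corput inequality converts this continuum problem into a statement about the countably many correlations $\tau(x^*\al^l(x))$, precisely because $|\lb^l|=1$ erases the $\lb$-dependence from the bound. The two places where the noncommutative setting demands extra care are the use of Lemma \ref{Lemma 2} to compute $a_k^*a_{k+l}$ correctly after the one-sided cut by $p$, and the reliance on $\al$ being a genuine homomorphism so that $\al^k(x^*)\al^{k+l}(x)$ collapses to $\al^k(x^*\al^l(x))$.
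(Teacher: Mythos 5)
Your proposal is correct and follows essentially the same route as the paper: the reduction via density of $\mc L^2$, closedness of $bWW$ (Theorem \ref{Theorem 4}) and $\mc K\su bWW$ (Proposition \ref{Proposition 3}); the single projection $p$ built from Proposition \ref{Proposition 6} for the countable family $\{x^*\al^l(x)\}$; the Van der Corput inequality (6) applied to $a_k=\lb^k\al^k(x)p$ with Lemma \ref{Lemma 2}; and Wiener's criterion with Cauchy--Schwarz to conclude uniform convergence to $0$. You even silently correct the paper's typo $\tau(x^*x)p=\|x\|_2\,p$ to the intended $\|x\|_2^2\,p$.
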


Note that (7) can be referred as non-commutative Bourgain's uniform Wiener-Wintner ergodic theorem.

\begin{rem}\label{Remark 2} As we have noticed (Proposition \ref{Proposition 7}), for a fixed $\lb \in \Bbb C_1$ and every $x\in \mc L^1$, 
the averages $a_n(x,\lb)$ converge b.a.u. to some $x_\lb \in \mc L^1$. It can be verified \cite{ld} that $x_\lb$ is a scalar multiple of $\Bbb I$.
If we assume additionally that $\al$ is {\it weakly mixing} in $\mc L^2$, that is $1$ is its only eigenvalue there, 
then it is easy to see that the b.a.u. limit of $\{ a_n(x, \lb) \}$ with $x\in \mc L^2$ is zero unless $\lb=1$. 
Since $\mc L^2$ is dense in $\mc L^1$, one can employ
an argument similar to that of Theorem \ref{Theorem 4} to show that $a_n(x,\lb) \to 0$ b.a.u. for every $x\in \mc L^1$ if $\lb \ne 1$. Therefore if $\al$
is weakly mixing, we can replace, in Theorem \ref{Theorem 6},
$$
\{ pa_n(x,\lb)p\} \ \text{converges in} \ \mc M  \ \text{for all} \ \lb \in \Bbb C_1.
$$
by
$$
\| pa_n(x,\lb)p \|_\ii \to 0 \ \ \text{if} \ \lb \ne 0\ \ \text{and} \ \ \| p(a_n(x)-x_1)p\|_\ii \to 0 \ \ \text{for some} \ x_1\in \mc L^1;
$$
see Proposition \ref{Proposition 7} and Remark \ref{Remark 1}.
\end{rem}

\end{document}